\numberwithin{equation}{section}
\theoremstyle{plain}
\newtheorem{theorem}{Theorem}[section]
\newtheorem{lemma}[theorem]{Lemma}
\newtheorem{corollary}[theorem]{Corollary}
\theoremstyle{definition}
\theoremstyle{remark}
\newtheorem{remark}[theorem]{Remark}
\newtheorem{case[theorem]}{Case}
\title[\parbox{14cm}{\centering{The Erd\"{o}s-Falconer distance problems  \hspace{1in}}} \quad]{ The generalized Erd\"{o}s-Falconer distance problems in vector spaces over finite fields }
\author{ Doowon Koh and Chun-Yen Shen }
\address{Department of Mathematics\\
Michigan State University \\
East Lansing, MI 48824,  USA}
\email{koh@math.msu.edu}
\address{Department of Mathematics\\
 Indiana University\\
Bloomington, IN 47405, USA}
\email{shenc@umail.iu.edu}
\thanks{Key words and phrases: generalized distance sets,  Erd\H os-Falconer distance problems, exponential sums, pinned distances}
\subjclass{52C10, 11T23}
\begin{document}

\begin{abstract} In this paper we study the generalized Erd\"{o}s-Falconer distance problems in the finite field setting.
The generalized distances are defined in terms of polynomials, and various formulas for sizes of distance sets are obtained.
In particular, we develop a simple formula for estimating the cardinality of distance sets determined by diagonal polynomials.
As a result, we generalize the spherical distance problems due to Iosevich and Rudnev \cite{IR07} and the cubic distance problems due to Iosevich and Koh \cite{IK08}. Moreover, our results are of higher dimensional version for Vu's work \cite{Vu08} on two dimension.
In addition, we set up and study the generalized pinned distance problems in finite fields.
We give a generalization of the work by the authors \cite{CEHIK09} who studied the pinned distance problems related to spherical distances.
Discrete Fourier analysis and exponential sum estimates play an important role in our proof.

\end{abstract}
\maketitle
\tableofcontents

\section{Introduction}

The Erd\H os distance problem, in a generalized sense, is a question of how many distances are determined by a set of points. This problem might be the most well-known problem in discrete geometry. One may consider discrete, continuous and finite field formulations of this question.
Given  finite subsets $E , F$ of ${\bf R^d}$, $d \ge 2,$ the distance set determined by the sets $E, F$ is defined by $ \Delta(E,F)=\{|x-y|: x\in E, y\in F\},$ where $|x|=\sqrt{x_1^2+\dots+x_d^2}$. In the case when $E=F$,  Erd\H os \cite{Er46} asked us to determine the smallest possible size of $\Delta(E,E)$ in terms of the size of $E$. This problem is called the Erd\H os distance problem and it has been conjectured that
$ |\Delta(E,E)| \gtrapprox {|E|}^{2/d}$ where $|\cdot|$ denotes the cardinality of the finite set. Taking $E$ as a piece of the integer lattice shows that one can not in general get the better exponent than $2/d$ for the conjecture. For all dimensions $d\geq 2,$ this problem has not been solved. In two dimension, the best known result is the work by Katz and Tardos \cite{KT04}, which is based on a previous breakthrough by Solymosi and T\'{o}th \cite{SolTo01}. For the best known results in higher dimensions see \cite{SV04} and \cite{SV05}. These results are a culmination of efforts going back to the paper by Erd\H os \cite{Er46}. \\

On the other hand, one can also study  the continuous analog of the Erd\H os distance problem, called the Falconer distance problem. This problem is to determine the Hausdorff dimension of compact sets such that the Lebesque measure of the distance sets is positive. Let $E \subset {\bf R^d}, d\geq 2,$ be a compact set. The Falconer distance conjecture says that  if dim$(E)> d/2,$ then $|\Delta(E,E)| >0$, where dim$(E)$ denotes the Hausdorff dimension of the set $E,$ and $|\Delta(E,E)|$ denotes one dimensional Lebesque measure of the distance set $\Delta(E,E)=\{|x-y|: x,y \in E\}.$
Using the Fourier transform method, Falconer \cite{Fa85} proved that if dim$(E) > (d+1)/2,$ then $|\Delta(E,E)|>0.$ This result was generalized by Mattila \cite{Ma87} who showed that
$$\mbox{if}\quad \mbox{dim}(E)+\mbox{dim}(F)> d+1, ~~\mbox{then}~~ |\Delta(E,F)|>0,$$
where $E, F$ are compact subsets of ${\bf R^d}$ and $\Delta(E,F)=\{|x-y|\in {\bf R}: x\in E, y\in F\}.$
In particular, he made a remarkable observation that the Falconer distance problem is closely related to estimating the upper bound of the spherical means of Fourier transforms of measures. Using the Mattila's method, Wolff \cite{Wo99} obtained the best known result on the Falconer distance problem in two dimension. He proved that if dim$(E)> 4/3$, then $|\Delta(E,E)|>0.$  The best known results for higher dimensions are due to
Erdo\~{g}an \cite{Er05}. Applying the Mattila's method and the weighted version of Tao's bilinear extension theorem \cite{Ta03}, he proved that
if $\mbox{dim}(E)> d/2+1/3$, then $|\Delta(E,E)| >0,$ where $d\geq 2$ is the dimension. However, the Falconer distance problem is still open for all dimensions $d\geq 2.$ As a variation of the Falconer distance problem, Peres and Schlag \cite{PS00} studied the pinned distance problems and showed that the Falconer result can be sharpen. More precisely,
they proved that if $ E\subset {\bf R^d}$ and dim$(E)> (d+1)/2$, then $ |\Delta(E, y)|>0$ for almost every $y\in E,$ where
the pinned distance set $ \Delta(E,y)$ is given by
$$ \Delta(E,y)=\{|x-y|: x\in E\}.$$\\

In recent years the Erd\H os-Falconer distance problem has been also studied in the finite field setting.
Let $\mathbb F_q$ be a finite field with $q$ elements. We denote by $\mathbb F_q^d, d\geq 2$, the $d$-dimensional vector space over the finite field ${\mathbb F}_q.$
Given a polynomial $P(x)\in \mathbb F_q[x_1,\dots, x_d]$ and $E, F \subset \mathbb F_q^d,$ one may define a generalized distance set $\Delta_P(E,F)$ by the set
\begin{equation}\label{defgd} \Delta_P(E, F) =\{ P(x-y)\in \mathbb F_q: x\in E, y\in F\}.\end{equation}
In the case when $E=F$ and $P(x)=x_1^2+x_2^2,$  Bourgain, Katz and Tao \cite{BKT04}  first obtained the following nontrivial result on the Erd\H os distance problem in the finite field setting: if $q$ is prime with $q \equiv 3~ (\mbox{mod}~4)$ and $E \subset \mathbb F_q^2$ with  $|E|=q^\delta$ for some $0<\delta<2,$ then there exists $\varepsilon=\varepsilon(\delta)>0$ such that
\begin{equation}\label{taoerdos}|\Delta_P(E,E)| \gtrsim |E|^{\frac{1}{2}+\varepsilon},\end{equation}
where we recall that if $A, B$ are positive numbers, then $A\lesssim B$ means that there exists $C>0$ independent of $q$, the cardinality of the underlying finite field $\mathbb F_q$ such that $A\leq CB.$ However, if there exists $i\in \mathbb F_q$ with $i^2=-1,$  or the field $\mathbb F_q$ is not the prime field, then the inequality (\ref{taoerdos}) can not be true in general. For example, if we take $E=\{(s, is)\in \mathbb F_q^2: s\in \mathbb F_q\},$ then $|E|=q$ but $|\Delta_P(E,E)|=|\{0\}|=1.$ Moreover, if $q=p^2$ with  $p$ prime, and  $E=\mathbb F_p^2,$ then $|E|=p^2=q$ but
$ |\Delta_P(E,E)|=p=\sqrt{q}.$ In view of these examples, Iosevich and Rudnev \cite{IR07} replaced the question on the Erd\H os distance problems by
the following Falconer distance problem in the finite field setting: how large a set $E\subset \mathbb F_q^d$ is needed to obtain a positive proportion of all distances. They first showed that if $|E|\geq 2q^{(d+1)/2}$ then one can obtain all distances that is $|\Delta_P(E,E)|=q$ where $P(x)=x_1^2+\cdots+x_d^2.$
In addition, they conjectured that $|E|\gtrsim q^{\frac{d}{2}}$ implies that $|\Delta_P(E,E)|\gtrsim q.$
In the case when $P(x)=x_1^s+\cdots+x_d^s, s\geq 2,$ more general conjecture was given by Iosevich and Koh \cite{IK08}.
However, it turned out that in the case $s=2$ if one wants to obtain all distances, then arithmetic examples constructed by authors in \cite{HIKR10} show that the exponent $(d+1)/2$ is sharp in odd dimensions. The problems in even dimensions are still open. Moreover if one wants to obtain a positive proportion of all distances, then the exponent $(d+1)/2$ was recently improved in two dimension by the authors in \cite{CEHIK09} who proved that if $E\subset \mathbb F_q^2$ with $|E|\gtrsim q^{4/3},$ then $|\Delta_P(E,E)|\gtrsim q$  where $P(x)=x_1^2+x_2^2.$ This result was generalized by Koh and Shen \cite{KS10} in the sense that if $E,F \subset \mathbb F_q^2$ and $|E||F|\gtrsim q^{8/3},$ then $|\Delta_P(E,F)|=|\{P(x-y)\in \mathbb F_q: x\in E, y\in F\}| \gtrsim q.$\\

In this paper, we shall study  the Erd\H os-Falconer distance problems for finite fields, associated with the generalized distance set defined as in (\ref{defgd}). This problem can be considered as a generalization of the spherical distance problems and the cubic distance problems which were studied by Iosevich and Rudnev in \cite{IR07} and Iosevich and Koh in \cite{IK08} respectively. The generalized Erd\H os distance problem  was first introduced by Vu \cite{Vu08}, mainly studying the size of the distance sets, generated by non-degenerate polynomials $P(x)\in \mathbb F_q[x_1,x_2].$ Using the spectral graph theory, he proved that if $P(x)\in \mathbb F_q[x_1,x_2]$ is a non-degenerate polynomial and $E \subset \mathbb F_q^2 $ with $|E|\gtrsim q,$  then we have
\begin{equation}\label{byvu}|\Delta_P(E,E)|\gtrsim \min \left( q, |E| q^{-\frac{1}{2}} \right)\end{equation}
where a polynomial $P(x)\in \mathbb F_q[x_1,x_2]$ is called a non-degenerate polynomial if it is not of the form $G(L(x_1,x_2))$ where $G$ is an one-variable polynomial and $L$ is a linear form in $x_1,x_2.$ In order to obtain the inequality (\ref{byvu}), the assumption $|E|\gtrsim q$ is  necessary in general setting, which is clear from the following example:
if $P(x)=x_1^2-x_2^2$ and $E=\{(t,t)\in \mathbb F_q^2: t\in \mathbb F_q \}$ is the line, then we see that
$|E|=q$ and $|\Delta_P(E,E)|=|\{0\}|=1$ and so the inequality (\ref{byvu}) can not be true. Using the Fourier analysis method,  Hart, Li, and Shen \cite{HLS10} showed that  $P(x)-b \in \mathbb F_q[x_1,x_2]$ does not have any linear factor for all $b\in \mathbb F_q$ if and only if  the following inequality holds:
\begin{equation}\label{byshen}
|\Delta_P(E,F)|\gtrsim \min \left( q, \sqrt{|E||F|} q^{-\frac{1}{2}}\right) \quad \mbox{for all}~~ E, F\subset \mathbb F_q^2.
\end{equation}

In the finite field setting,  results on the Erd\H os distance problem implies  results on the Falconer distance problem.
For example, the inequality (\ref{byshen}) implies that if $E, F\subset \mathbb F_q^2$ with $|E||F|\gtrsim q^3$, then $\Delta_P(E,F)$ contains a positive proportion of all possible distances, that is $|\Delta_P(E,F)|\gtrsim q.$ \\

The purpose of this paper is to develop the two-dimensional work  by Vu \cite{Vu08} to higher dimensions. In terms of the Fourier decay on varieties generated by general polynomials, we classify the size of distance sets. In particular, we investigate the size of the generalized Erd\H os-Falconer distance sets related to diagonal polynomials, that are of the form
$$P(x)= \sum_{j=1}^d a_jx_j^{c_j} \in \mathbb F_q[x_1,\dots,x_d]$$
where $ a_j\ne 0$ and $c_j\geq 2$  for all $i=1,\dots,d.$
The polynomial $P(x)=\sum_{j=1}^d x_j^2$ is related to the spherical distance problem. In this case, the Erd\H os-Falconer distance problems were well studied by Iosevich and Rudnev \cite{IR07}. On the other hand, Iosevich and Koh \cite{IK08} studied the cubic distance problems associated with the polynomial $P(x)=\sum_{j=1}^d x_j^3.$ In addition,  Vu's theorem (\ref{byvu}) gives us some results on the Erd\H os-Falconer distance problems in two dimension related to the polynomial $P(x)= a_1x_1^{c_1}+a_2x_2^{c_2}.$ As we shall see, our results will recover and extend the aforementioned authors' work. We also study the generalized pinned distance problems in the finite field setting.
As the analogue of the Euclidean pinned distance problem,  the authors in \cite{CEHIK09} considered the following pinned distance set:
$$\Delta_P(E,y)=\{P(x-y)\in \mathbb F_q: x\in E\}$$
where  $E\subset \mathbb F_q^d, y\in \mathbb F_q^d$ and $P(x)=x_1^2+\cdots+x_d^2.$ Using the fact that for $x,x^\prime, y \in \mathbb F_q^d,$
\begin{equation}\label{same} P(x-y)-P(x^\prime-y) =(P(x)-2 y\cdot x)-(P(x^\prime)-2y\cdot x^\prime ),\end{equation}
they obtained the following strong result.
\begin{theorem}\label{pinmythm}
Let $E\subset \mathbb F_q^d, d\geq 2.$ If $|E|\geq q^{\frac{d+1}{2}},$ then there exists $E^\prime \subset E$ with $|E^\prime|\sim |E|$ such that
$$ |\Delta_P(E, y)| > \frac{q}{2} \quad \mbox{for all} ~~ y\in E^\prime,$$
where $P(x)=x_1^2+\cdots+x_d^2.$ \end{theorem}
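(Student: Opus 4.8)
The plan is to reduce the size of the pinned distance set to a second-moment estimate for the level sets of the maps $x\mapsto P(x-y)$, and then to control that second moment, averaged over $y$, by a short orthogonality/Plancherel computation built on the identity (\ref{same}). Throughout, $q$ is odd (so $2\ne0$ in $\mathbb F_q$), $\chi$ is a fixed nontrivial additive character of $\mathbb F_q$, and for $g:\mathbb F_q^d\to\mathbb C$ we write $\widehat g(\xi)=\sum_{x}g(x)\chi(-x\cdot\xi)$.

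For $y\in\mathbb F_q^d$ set $f_y(x)=P(x)-2y\cdot x$. Expanding $P(x-y)=P(x)-2x\cdot y+P(y)$ gives $\Delta_P(E,y)=\{f_y(x)+P(y):x\in E\}$, and by (\ref{same}) we have $P(x-y)=P(x'-y)$ iff $f_y(x)=f_y(x')$. So, writing $\nu_y(t)=|\{x\in E:f_y(x)=t\}|$ — thus $\sum_t\nu_y(t)=|E|$ and $\nu_y$ is supported on a set of size $|\Delta_P(E,y)|\le q$ — Cauchy--Schwarz gives both $\sum_t\nu_y(t)^2\ge|E|^2/q$ and
$$|\Delta_P(E,y)|\ \ge\ \frac{|E|^2}{\displaystyle\sum_t\nu_y(t)^2}=\frac{|E|^2}{\big|\{(x,x')\in E\times E:\ f_y(x)=f_y(x')\}\big|}.$$
Hence it suffices to show that $\sum_t\nu_y(t)^2<2|E|^2/q$ for all but a small fraction of $y\in E$.

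I would then bound $\sum_{y\in E}\sum_t\nu_y(t)^2$. Using $f_y(x)-f_y(x')=(P(x)-P(x'))-2y\cdot(x-x')$ and the orthogonality relation $\mathbf 1[a=0]=q^{-1}\sum_{s\in\mathbb F_q}\chi(sa)$,
$$\sum_{y\in E}\sum_t\nu_y(t)^2=\frac1q\sum_{s\in\mathbb F_q}\ \sum_{y\in E}\Big|\sum_{x\in E}\chi(s f_y(x))\Big|^2=\frac{|E|^3}{q}+\frac1q\sum_{s\ne0}\ \sum_{y\in E}\Big|\sum_{x\in E}\chi(s f_y(x))\Big|^2,$$
the $s=0$ term being the main term. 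For $s\ne0$, drop the restriction $y\in E$ (the summand is nonnegative), note $\sum_{x\in E}\chi(sf_y(x))=\widehat{g_s}(2sy)$ with $g_s(x)=\mathbf 1_E(x)\chi(sP(x))$, and observe that $y\mapsto 2sy$ is a bijection of $\mathbb F_q^d$; Plancherel then yields $\sum_{y\in\mathbb F_q^d}|\widehat{g_s}(2sy)|^2=\sum_\xi|\widehat{g_s}(\xi)|^2=q^d|E|$. Therefore $\sum_{y\in E}\sum_t\nu_y(t)^2\le |E|^3/q+q^d|E|$, i.e. $\sum_{y\in E}\big(\sum_t\nu_y(t)^2-|E|^2/q\big)\le q^d|E|$ with every summand nonnegative.

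To finish, let $B=\{y\in E:|\Delta_P(E,y)|\le q/2\}$. The displayed inequality forces $\sum_t\nu_y(t)^2\ge 2|E|^2/q$ for $y\in B$, so each such $y$ contributes at least $|E|^2/q$ to the last sum, giving $|B|\le q^{d+1}/|E|$. Since $|E|\ge q^{(d+1)/2}$ this is $\le q^{(d+1)/2}$, so $E':=E\setminus B$ satisfies $|E'|\ge|E|-q^{d+1}/|E|\sim|E|$ (once the hypothesis is taken with a suitable absolute constant, which the notation $\sim$ absorbs), and $|\Delta_P(E,y)|>q/2$ for every $y\in E'$. The one substantive point is the Plancherel step: the force of (\ref{same}) is that, after peeling off $P(y)$, the inner sum over $x$ becomes an honest Fourier transform of the weighted indicator $g_s$ evaluated at $2sy$, so that summing its square over \emph{all} $y$ collapses — using only orthogonality and $2\ne 0$ — to $q^d|E|$, with no dependence on the structure of $E$. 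The only other care needed is to subtract the universal lower bound $|E|^2/q$ from $\sum_t\nu_y(t)^2$ before applying Markov's inequality; doing it afterward gives only the useless bound $|B|\le|E|$.
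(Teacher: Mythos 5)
Your argument is correct, and it is essentially the proof from \cite{CEHIK09} that the paper cites for this statement rather than reproducing; the paper's own machinery for pinned distances is the more general Theorem \ref{mainpintheorem}. The two routes share the second-moment skeleton (bound $\sum_t\nu_y(t)^2$ on average over $y$, then Cauchy--Schwarz/Markov), but they control the second moment differently. The paper expands the indicator of the sphere $V_t$ by Fourier inversion in the spatial frequency $m\in\mathbb F_q^d$ and pays for the off-diagonal terms with the decay $|\widehat{V_t}(m)|\lesssim q^{-(d+1)/2}$ of Lemma \ref{Todd} (a nontrivial Gauss/Kloosterman-type input); this works for any polynomial with such decay, and its final averaging-plus-pigeonhole step yields $|\Delta_P(E,y)|\gtrsim q$ on a large subset. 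You instead complete the sum in the radius variable $s$ and exploit identity (\ref{same}): the inner sum becomes $\widehat{g_s}(2sy)$, so Plancherel in $y$ costs exactly $q^d|E|$ with no exponential-sum estimate at all, and Markov on the excess $\sum_t\nu_y(t)^2-|E|^2/q$ gives the explicit threshold $q/2$ together with the clean exceptional-set bound $|B|\le q^{d+1}/|E|$. The trade-off is that your argument is tied to the quadratic structure (it is precisely the step that fails for general $P$, which is why the paper develops Theorem \ref{mainpintheorem}). The caveat you flag is real but minor: with the hypothesis taken literally as $|E|\ge q^{(d+1)/2}$, the bound $|B|\le q^{d+1}/|E|$ only gives $|B|\le|E|$, so to conclude $|E'|\sim|E|$ one needs $|E|\ge C q^{(d+1)/2}$ for some $C>1$ (e.g.\ $C=\sqrt2$ gives $|E'|\ge|E|/2$); this is a constant-level looseness in the statement as quoted, consistent with the paper's conventions, not a defect of your method.
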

However, if the polynomial $P(x)$ is replaced by a general polynomial in $\mathbb F_q[x_1,\dots,x_d],$ then the  equality (\ref{same}) can not be in general obtained. Thus, the main idea for the proof of Theorem \ref{pinmythm} could not be applied to the generalized pin distance problems. Investigating the Fourier decay on the variety generated by a general polynomial, we shall generalize Theorem \ref{pinmythm}. For instance, our result implies that  such fact as above theorem can be obtained if the polynomial $P$ is a diagonal polynomial with all exponents same.

\section{Discrete Fourier analysis and exponential sums}
In order to prove our main results on the generalized Erd\H os-Falconer distance problems, the discrete Fourier analysis shall be used as the principle tool. In this section, we review the discrete Fourier analysis machinery for finite fields, and collect well-known facts on classical exponential sums.
\subsection{ Finite Fourier analysis}
Let $\mathbb F_q^d, d\geq 2,$ be a $d$-dimensional vector space over the finite field $\mathbb F_q$ with $q$ element. We shall work on the vector space $\mathbb F_q^d,$ and throughout the paper, we shall assume that the characteristic of the finite field $\mathbb F_q$ is sufficiently large so that some minor technical problems can be overcome.
We denote by $\chi: \mathbb F_q \rightarrow \mathbb S^1$ the canonical additive character of $\mathbb F_q.$
For example, if $q$ is prime, then we can take $ \chi(s)=e^{2\pi i s/ q}.$ For the example of the canonical additive character of the general field $\mathbb F_q,$ see Chapter $5$ in \cite{LN93}. Let $f: \mathbb F_q^d \rightarrow \mathbb C$ be a complex valued function on $\mathbb F_q^d.$ Then, the Fourier transform of the function $f$ is defined by
\begin{equation}\label{Ftransform} \widehat{f}(m)= \frac{1}{q^d} \sum_{x\in \mathbb F_q^d} f(x) \chi(-x\cdot m)\quad \mbox{for}~~m\in \mathbb F_q^d.\end{equation}
We also recall in this setting that the Fourier inversion theorem says that
\begin{equation}\label{Finversion} f(x)=\sum_{m\in \mathbb F_q^d} \chi(x\cdot m) \widehat{f}(m).\end{equation}
Using the orthogonality relation of the canonical additive character $\chi$, that is $ \sum_{x\in \mathbb F_q^d} \chi(x\cdot m)=0 $ for $m\neq (0,\dots,0)$ and $ \sum_{x\in \mathbb F_q^d} \chi(x\cdot m)= q^d $ for $m=(0,\dots,0)$,  we obtain the following Plancherel theorem:
$$ \sum_{m\in \mathbb F_q^d} |\widehat{f}(m)|^2 = \frac{1}{q^d} \sum_{x\in \mathbb F_q^d} |f(x)|^2.$$
For example, if $f$ is a characteristic function on the subset $E$ of $\mathbb F_q^d,$ then we see
\begin{equation}\label{Plancherel} \sum_{m\in \mathbb F_q^d} |\widehat{E}(m)|^2 = \frac{|E|}{q^d},\end{equation}
here, and throughout the paper, we identify the set $E \subset \mathbb F_q^d$ with the characteristic function on the set $E$, and we denotes by $|E|$ the cardinality of the set $E \subset \mathbb F_q^d.$

\subsection{ Exponential sums} Using the discrete Fourier analysis, we shall make an effort to reduce the generalized Erd\H os-Falconer distance problems to estimating classical exponential sums. Some of our formulas for the distance problems can be directly applied via recent well-known
exponential sum estimates. For example, the following lemma is well known and it was obtained by applying cohomological arguments (see Example 4.4.19 in \cite{Co94}).
\begin{lemma}\label{Todd} Let $P(x)=\sum\limits_{j=1}^d a_jx_j^s \in \mathbb F_q[x_1,\dots,x_d]$ with $s\geq 2, a_j\neq 0$ for all $j=1,\dots,d.$
In addition, assume that the characteristic of $\mathbb F_q$ is sufficiently large  so that  it does not divide $s.$ Then,
$$ |\widehat{V_t}(m)|=\frac{1}{q^d} \left|\sum_{x\in V_t} \chi(-x\cdot m)\right| \lesssim q^{-\frac{d+1}{2}} \quad \mbox{for all}~~ m\in \mathbb F_q^d\setminus \{(0,\dots,0)\}, t\in \mathbb F_q\setminus\{0\},$$
and $$ |\widehat{V_0}(m)| \lesssim q^{-\frac{d}{2}} \quad \mbox{for all} ~~ m \in \mathbb F_q^d\setminus \{(0,\dots,0)\},$$
where $V_t =\{x\in \mathbb F_q^d: P(x)=t\}.$
\end{lemma}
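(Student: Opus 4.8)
The plan is to compute the Fourier transform $\widehat{V_t}(m)$ by unwinding the definition, introduce a dummy sum over $\mathbb F_q$ to detect the condition $P(x) = t$, and reduce everything to a product of one-dimensional Gauss-type sums. Writing out the definition,
\[
\widehat{V_t}(m) = \frac{1}{q^d}\sum_{x\in\mathbb F_q^d} \mathbf 1_{P(x)=t}\,\chi(-x\cdot m)
= \frac{1}{q^{d+1}}\sum_{r\in\mathbb F_q}\sum_{x\in\mathbb F_q^d} \chi\bigl(r(P(x)-t)\bigr)\chi(-x\cdot m),
\]
using the orthogonality relation $\frac{1}{q}\sum_{r\in\mathbb F_q}\chi(r(P(x)-t)) = \mathbf 1_{P(x)=t}$. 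The $r=0$ term contributes $q^{-d-1}\sum_x \chi(-x\cdot m)$, which vanishes for $m\neq (0,\dots,0)$ by orthogonality. For the remaining terms, because $P$ is diagonal the inner sum over $x$ factors as $\prod_{j=1}^d \sum_{x_j\in\mathbb F_q}\chi(ra_jx_j^s - x_j m_j)$, a complete exponential sum in one variable with an additive twist.

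Next I would invoke the classical Weil bound for such one-variable sums: for a fixed $r\neq 0$, since the characteristic does not divide $s$, the polynomial $ra_jx_j^s - m_j x_j$ has degree $s$ coprime to the characteristic, so
\[
\Bigl|\sum_{x_j\in\mathbb F_q}\chi\bigl(ra_jx_j^s - m_j x_j\bigr)\Bigr| \leq (s-1)\,q^{1/2}.
\]
Multiplying over the $d$ coordinates gives $\bigl|\sum_x \chi(r(P(x)-t) - x\cdot m)\bigr| \leq (s-1)^d q^{d/2}$ for each nonzero $r$. Summing the trivial way over the $q-1$ nonzero values of $r$ and dividing by $q^{d+1}$ yields
\[
|\widehat{V_t}(m)| \leq \frac{(q-1)(s-1)^d q^{d/2}}{q^{d+1}} \lesssim q^{-d/2-1+1/2}\cdot q^{1/2}\cdot q^{-1/2}\cdots
\]
— and here is the catch: the naive bound only gives $q^{-d/2}$, which is the stated estimate for $V_0$ but falls short of $q^{-(d+1)/2}$ for $t\neq 0$. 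The main obstacle, therefore, is extracting the extra half-power of $q$ for nonzero $t$; the gain in $t$ comes from additional cancellation in the sum over $r$, which is exactly where a pointwise Weil bound is insufficient and one needs the deeper input. This is precisely why the lemma is attributed to a cohomological computation (Example 4.4.19 in \cite{Co94}) rather than proved by elementary character-sum manipulation: one views $\sum_{x,r}$ as a sum over an affine variety and controls it via bounds on the dimensions and weights of the relevant $\ell$-adic cohomology groups of the hypersurface $\{P(x)=t\}$, which for $t\neq 0$ is smooth and gives square-root cancellation in the full $(d)$-dimensional sum, i.e. $q^{(d-1)/2}$ for the affine-variety point count, hence $q^{-(d+1)/2}$ after the normalization $q^{-d}$.

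For the $t=0$ bound, the argument is genuinely elementary: one can either cite the same reference, or observe directly that $V_0$ is a cone through the origin, estimate $|V_0| \lesssim q^{d-1}$, and bound $|\widehat{V_0}(m)|$ by combining the Plancherel identity (\ref{Plancherel}) with the homogeneity — indeed scaling $x\mapsto \lambda x$ permutes $V_0$ and acts on the character, forcing $\widehat{V_0}(m)$ to depend only on the projective class of $m$, which after averaging yields the $q^{-d/2}$ decay. In any case, since the statement is quoted from the literature, the cleanest route for the paper is simply to record the reference for the cohomological input and spend no further effort on the derivation; the only thing worth emphasizing is the structural reason — diagonality lets everything factor, nonvanishing of $t$ makes the variety smooth, and smoothness is what upgrades $q^{-d/2}$ to $q^{-(d+1)/2}$.
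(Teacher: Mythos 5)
Your proposal matches the paper's treatment: the paper offers no proof of this lemma at all, simply citing Example 4.4.19 in \cite{Co94} for the cohomological input, and you correctly diagnose that this is unavoidable for $t\neq 0$ --- the orthogonality-plus-Weil reduction you carry out gives square-root cancellation in $x$ for each fixed $r\neq 0$ but nothing in the $r$-sum, so it stalls at $q^{-d/2}$, and the extra $q^{-1/2}$ for $t\neq 0$ genuinely requires the deeper smoothness/weight argument. Your elementary computation does, however, completely prove the stated $t=0$ bound (the trivial sum over the $q-1$ values of $r$ already yields $(s-1)^d q^{-d/2}$), which is more than the paper provides. One caution: your ``alternative'' argument for $t=0$ via homogeneity and Plancherel does not work as stated --- the scaling orbit of a nonzero frequency $m$ has size only $q-1$, so combining $\widehat{V_0}(m)=\widehat{V_0}(\lambda m)$ with $\sum_m|\widehat{V_0}(m)|^2=|V_0|q^{-d}\lesssim q^{-1}$ gives only $|\widehat{V_0}(m)|\lesssim q^{-1}$, which is weaker than $q^{-d/2}$ once $d\geq 3$; discard that remark and keep the Weil-bound derivation, which is correct.
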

However, some theorems obtained by cohomological arguments contain abstract assumptions, and it can be often hard to apply them in practice. In order to overcome this problem, we shall also develop an alternative formula which is closely related to more simple exponential sums. As we shall see, such a simple formula can be obtained by viewing the distance problem in $d$ dimensions as the distance  problem for product sets in $(d+1)-$dimensional vector spaces. As a typical application of our simple distance formula, we shall obtain the results on the Falconer distance problems related to arbitrary diagonal polynomials, which take the following forms: $P(x)=\sum_{j=1}^d a_jx_j^{c_j}$ for $c_j\geq 2, a_j\neq 0$ for all $j.$ It is shown that such results can be obtained by applying  the following well-known Weil's theorem. For a nice proof of Weil's theorem, we refer readers to Theorem 5.38 in \cite{LN93}.
\begin{theorem}\label{Weil} [Weil's Theorem]
Let $f\in \mathbb F_q[s]$ be of degree $c\geq 1$ with gcd$(c,q)=1$. Then, we have
$$ \left|\sum_{s\in \mathbb F_q} \chi(f(s))\right| \leq (c-1) q^{\frac{1}{2}},$$
where $\chi$ denotes a nontrivial additive character of $\mathbb F_q.$
\end{theorem}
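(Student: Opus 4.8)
The plan is to deduce Weil's bound from the Riemann Hypothesis for curves over finite fields, applied to a suitable Artin--Schreier curve. Write $q=p^{n}$ with $p=\mathrm{char}\,\mathbb{F}_q$, so that $\gcd(c,q)=1$ just means $p\nmid c$. Every nontrivial additive character of $\mathbb{F}_q$ has the form $\chi(t)=\chi_0(bt)$ for a unique $b\in\mathbb{F}_q^{*}$, where $\chi_0=\psi_0\circ\mathrm{Tr}_{\mathbb{F}_q/\mathbb{F}_p}$ and $\psi_0$ is the canonical additive character of $\mathbb{F}_p$; since $g:=bf$ again has degree $c$ with $p\nmid c$, it suffices to bound $\sum_{s\in\mathbb{F}_q}\chi_0(g(s))$. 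I would then study the affine curve $C:\ \wp(y)=g(x)$ over $\mathbb{F}_q$, where $\wp(y)=y^{p}-y$, together with its smooth projective model $\overline{C}$; because $p\nmid c$ the polynomial $g$ is not of the form $h^{p}-h+\mathrm{const}$, so $\overline{C}$ is geometrically irreducible.

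The argument then has four steps. \emph{(i) Point count.} The operator $\wp$ is $\mathbb{F}_p$-linear with kernel $\mathbb{F}_p$ and image $\{z\in\mathbb{F}_q:\mathrm{Tr}_{\mathbb{F}_q/\mathbb{F}_p}(z)=0\}$, so over $x\in\mathbb{F}_q$ the fibre of $C\to\mathbb{A}^{1}$ has $p$ points when $\mathrm{Tr}(g(x))=0$ and none otherwise; expanding the indicator of $\mathrm{Tr}(g(x))=0$ in additive characters of $\mathbb{F}_p$ and adding the unique $\mathbb{F}_q$-rational point above $x=\infty$ (which is totally ramified in $C$) gives
$$\#\overline{C}(\mathbb{F}_q)=q+1+\sum_{\lambda\in\mathbb{F}_p^{*}}\sum_{x\in\mathbb{F}_q}\chi_0(\lambda g(x)).$$
\emph{(ii) Genus.} Since $p\nmid c$, the only ramified place of $\mathbb{F}_q(x)\subset\mathbb{F}_q(\overline{C})$ is $x=\infty$, totally (wildly) ramified with different exponent $(p-1)(c+1)$; Riemann--Hurwitz gives $2g(\overline{C})-2=-2p+(p-1)(c+1)$, hence $g(\overline{C})=\tfrac{(p-1)(c-1)}{2}$. \emph{(iii) Weil bound on $\overline{C}$.} The Hasse--Weil estimate yields $\bigl|\#\overline{C}(\mathbb{F}_q)-(q+1)\bigr|\le 2g(\overline{C})\,q^{1/2}=(p-1)(c-1)\,q^{1/2}$. \emph{(iv) Isolating a single character.} The translations $y\mapsto y+a$, $a\in\mathbb{F}_p$, are automorphisms of $\overline{C}$ defined over $\mathbb{F}_q$ and commute with Frobenius, so the $\ell$-adic cohomology $H^{1}(\overline{C})$ splits Frobenius-equivariantly according to the characters of $\mathbb{Z}/p$: the trivial summand is $H^{1}(\mathbb{P}^{1})=0$, while for each $\lambda\in\mathbb{F}_p^{*}$ the corresponding nontrivial summand has dimension $c-1$ (by Grothendieck--Ogg--Shafarevich, the Swan conductor at $\infty$ of the associated rank-one sheaf $\mathcal{L}_{\chi_0}(\lambda g)$ being $c$ since $p\nmid c$) and, by (iii), only Frobenius eigenvalues of absolute value $q^{1/2}$. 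Since $\sum_{x\in\mathbb{F}_q}\chi_0(\lambda g(x))$ is, up to sign, the trace of Frobenius on that summand, $\bigl|\sum_{x\in\mathbb{F}_q}\chi_0(\lambda g(x))\bigr|\le(c-1)q^{1/2}$ for every $\lambda\in\mathbb{F}_p^{*}$; specializing $\lambda=1$ gives $\bigl|\sum_{s\in\mathbb{F}_q}\chi(f(s))\bigr|\le(c-1)q^{1/2}$, as desired.

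The essential content — and the real obstacle — is the Riemann Hypothesis for curves invoked in step (iii): any proof proposal here is honestly a reduction to that deep theorem, and if one insists on a self-contained argument one should instead run Stepanov's elementary method, constructing an auxiliary polynomial of controlled degree that vanishes to high order at the $x$ with $\mathrm{Tr}(f(x))=j$, thereby bounding $N_j:=\#\{x\in\mathbb{F}_q:\mathrm{Tr}(f(x))=j\}$ and then recovering $\sum_{x}\chi_0(\lambda f(x))=\sum_{j\in\mathbb{F}_p}N_j\,\psi_0(\lambda j)$ by Fourier inversion over $\mathbb{F}_p$. I also expect step (iv) to require care in a rigorous write-up: the bare point count in (i)--(iii) only controls the sum over $\lambda\in\mathbb{F}_p^{*}$, which is weaker by a factor $p-1$, so the $\mathbb{Z}/p$-equivariant splitting (or, in Stepanov's approach, the separate handling of each residue class $j$) is precisely what upgrades this to the bound $(c-1)q^{1/2}$ for a single character; verifying that each isotypic summand really has dimension $c-1$ and is pure of weight one is where the work lies.
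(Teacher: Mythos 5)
The paper does not actually prove this statement; it is quoted as a classical theorem with a pointer to Theorem 5.38 of Lidl--Niederreiter, whose proof runs through the L-function attached to the character sum (lifted sums over $\mathbb{F}_{q^s}$, the Davenport--Hasse relation, and the rationality and purity of that L-function). Your route --- the Artin--Schreier curve $y^p-y=g(x)$, Riemann--Hurwitz to get $g(\overline{C})=(p-1)(c-1)/2$, Hasse--Weil, and then the $\mathbb{Z}/p$-equivariant splitting of $H^1$ to isolate a single character --- is the standard geometric incarnation of the same content and is correct as a reduction. You have also put your finger on exactly the right pitfall: the raw point count only bounds $\sum_{\lambda\in\mathbb{F}_p^*}\sum_x\chi_0(\lambda g(x))$, and the isotypic decomposition (each nontrivial summand of dimension $c-1$, consistent with $2g=(p-1)(c-1)$) is what recovers the bound for one $\lambda$. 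Two small points to tighten in a careful write-up. First, step (iv) needs slightly more than the single-field inequality of step (iii): the statement that \emph{every} Frobenius eigenvalue on $H^1(\overline{C})$ has absolute value exactly $q^{1/2}$ is the Riemann Hypothesis for $\overline{C}$, which one extracts from the point-count bounds over all extensions $\mathbb{F}_{q^s}$ together with the functional equation (or simply invokes as RH), not from the $s=1$ inequality alone. Second, geometric irreducibility of $\overline{C}$ should be justified by noting that $g$ cannot equal $h^p-h+\mathrm{const}$ because $p\nmid\deg g$; you assert this, and it is exactly where the hypothesis $\gcd(c,q)=1$ enters (it also guarantees the pole order at infinity is prime to $p$, which is what your different exponent $(p-1)(c+1)$ requires). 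As you say yourself, the argument is honestly a reduction to the Riemann Hypothesis for curves; a genuinely self-contained proof would have to substitute Stepanov's method, which is also essentially what the cited reference ultimately rests on. Given that the paper treats this as a black box, your proposal is an acceptable and correctly organized justification.
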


We now collect well-known facts which make a crucial role in the proof of our main results.
First, we introduce the cardinality of varieties related to arbitrary diagonal polynomials.
The following theorem is due to Weil \cite{We49}. See also Theorem 3.35 in \cite{Co94} or Theorem 6.34 in \cite{LN93}.
\begin{theorem}\label{Cardinality} Let $P(x)=\sum_{j=1}^d a_jx_j^{c_j}$ with $a_j\neq 0, c_j\geq 1$ for all $j=1,\dots,d.$
For every $t\in\mathbb F_q \setminus \{0\},$ we have
$$ |\{x\in \mathbb F_q^d: P(x)=t\}| \sim q^{d-1}.$$\end{theorem}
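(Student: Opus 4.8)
The plan is to compute $N:=\left|\{x\in\mathbb F_q^d:P(x)=t\}\right|$ by discrete Fourier analysis and then to bound the resulting error term by classical character sums. Throughout, $\chi$ is the canonical additive character, $d\ge 2$ as in our standing assumptions, and the characteristic is large enough that $\gcd(c_j,q)=1$ for every $j$. Using the orthogonality relation (namely that $\frac1q\sum_{r\in\mathbb F_q}\chi(rs)$ equals $1$ for $s=0$ and $0$ otherwise) together with the fact that $P$ splits into one-variable summands, one writes
\begin{equation*}
N=\frac1q\sum_{r\in\mathbb F_q}\sum_{x\in\mathbb F_q^d}\chi\!\left(r\big(P(x)-t\big)\right)
=q^{d-1}+\frac1q\sum_{r\in\mathbb F_q\setminus\{0\}}\chi(-rt)\prod_{j=1}^d S_j(r),
\end{equation*}
where $S_j(r):=\sum_{x\in\mathbb F_q}\chi\!\left(ra_jx^{c_j}\right)$. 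It therefore suffices to show that the last sum is $o(q^{d-1})$; the two-sided estimate $N\sim q^{d-1}$ then follows as soon as $q$ is large.

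The decisive step is to rewrite $S_j(r)$ for $r\neq 0$ in terms of multiplicative characters. Put $e_j:=\gcd(c_j,q-1)$. For $y\in\mathbb F_q\setminus\{0\}$ the number of $x$ with $x^{c_j}=y$ equals $\sum_{\psi^{e_j}=\psi_0}\psi(y)$, the sum running over all multiplicative characters of order dividing $e_j$; isolating the trivial character $\psi_0$ (whose contribution is $-1$, which cancels the $x=0$ term of $S_j$) and evaluating the remaining twisted sums by Gauss sums, one obtains
\begin{equation*}
S_j(r)=\sum_{\substack{\psi\,:\,\psi^{e_j}=\psi_0\\ \psi\neq\psi_0}}\overline{\psi}(ra_j)\,g(\psi),
\qquad g(\psi):=\sum_{x\in\mathbb F_q\setminus\{0\}}\psi(x)\chi(x),\qquad |g(\psi)|=q^{1/2}.
\end{equation*}
(When $e_j=1$ --- in particular when $c_j=1$ --- this is an empty sum, so $S_j(r)\equiv 0$ and hence $N=q^{d-1}$ exactly.) Substituting this into the error term, interchanging the summations and collecting the $r$-dependence,
\begin{align*}
N-q^{d-1}
&=\frac1q\sum_{r\in\mathbb F_q\setminus\{0\}}\chi(-rt)\prod_{j=1}^d\Bigl(\sum_{\psi_j}\overline{\psi_j}(ra_j)\,g(\psi_j)\Bigr)\\
&=\frac1q\sum_{\psi_1,\dots,\psi_d}\Bigl(\prod_{j=1}^d g(\psi_j)\,\overline{\psi_j}(a_j)\Bigr)\sum_{r\in\mathbb F_q\setminus\{0\}}\chi(-rt)\,\overline{(\psi_1\cdots\psi_d)}(r),
\end{align*}
where each $\psi_j$ ranges over the $e_j-1$ nontrivial characters with $\psi_j^{e_j}=\psi_0$.

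To finish, I bound each summand. Every $\psi_j$ appearing is nontrivial, so $\prod_{j=1}^d|g(\psi_j)|=q^{d/2}$; and the inner sum over $r$ equals $-1$ when $\psi_1\cdots\psi_d=\psi_0$, and otherwise --- after the substitution $u=rt$ --- a Gauss sum of the nontrivial character $\overline{\psi_1\cdots\psi_d}$ times a unimodular factor, hence is of absolute value at most $q^{1/2}$ in all cases. Thus each summand has absolute value at most $q^{-1}\cdot q^{d/2}\cdot q^{1/2}=q^{(d-1)/2}$, and the number of summands is $\prod_{j=1}^d(e_j-1)\le\prod_{j=1}^d(c_j-1)$, a constant depending only on $P$. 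Hence $\left|N-q^{d-1}\right|\lesssim q^{(d-1)/2}$, which is $o(q^{d-1})$ because $d\ge 2$, and therefore $N\sim q^{d-1}$.

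The step I expect to require the most care is the case $d=2$. For $d\ge 3$ one does not even need multiplicative characters: the crude bound $|S_j(r)|\le(c_j-1)q^{1/2}$ coming from Weil's Theorem (Theorem \ref{Weil}) already yields $\left|N-q^{d-1}\right|\lesssim q^{d/2}=o(q^{d-1})$. When $d=2$, however, $q^{d/2}=q$ has the same order as the main term $q^{d-1}=q$, so one must genuinely exploit the extra square-root cancellation in the sum over the frequency $r$ above --- equivalently, the fact that the relevant Jacobi sums have modulus $q^{1/2}$ rather than $q$. The other thing to keep track of is that the number of multiplicative characters entering the computation stays bounded independently of $q$, which is what makes all implied constants uniform.
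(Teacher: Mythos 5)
Your proof is correct. The paper does not actually prove Theorem \ref{Cardinality}; it cites Weil \cite{We49} (see also Theorem 6.34 in \cite{LN93}), and your argument --- orthogonality to isolate the main term $q^{d-1}$, conversion of each $S_j(r)$ into Gauss sums via multiplicative characters of order dividing $\gcd(c_j,q-1)$, and the final Jacobi-sum bound $q^{(d-1)/2}$ on the error --- is essentially the classical proof given in those references, including the correct observation that the extra square-root saving from the sum over $r$ is what makes the case $d=2$ work.
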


The following lemma is known as the Schwartz-Zippel Lemma (see \cite{Zi79} and \cite{Sc80}).
A nice proof is also given in Theorem $6.13$ in \cite{LN93}.
\begin{lemma}\label{SZlemma}[Schwartz-Zippel] Let $P(x)\in \mathbb F_q[x_1,\dots, x_d] $ be a non zero polynomial with
degree $k$. Then, we have
$$ |\{x\in \mathbb F_q^d: P(x)=0\}|\leq kq^{d-1}.$$
\end{lemma}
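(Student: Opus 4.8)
The plan is to prove the lemma by induction on the dimension $d$, i.e.\ on the number of variables; this is the standard argument underlying the Schwartz--Zippel lemma. The base case $d=1$ is the classical fact that a nonzero polynomial of degree $k$ over the field $\mathbb F_q$ has at most $k$ roots, which immediately gives $|\{x\in\mathbb F_q : P(x)=0\}|\le k = kq^{0}$. This is precisely the step where the field hypothesis is essential; over a general ring the bound can fail.

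For the inductive step, I assume the bound holds in $d-1$ variables and take $P(x_1,\dots,x_d)$ nonzero of degree $k$. I would single out the variable $x_d$ and expand
$$P(x_1,\dots,x_d)=\sum_{i=0}^{m} x_d^{\,i}\, P_i(x_1,\dots,x_{d-1}),$$
where $m=\deg_{x_d}P$ and $P_m\not\equiv 0$. The crucial bookkeeping observation is that, since $x_d^{\,m}\mu$ is a monomial of $P$ for each monomial $\mu$ of $P_m$, every monomial of $P_m$ has total degree at most $k-m$; hence $P_m$ is a nonzero polynomial in $d-1$ variables of degree at most $k-m$.

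Next I split the zero set of $P$ according to whether the leading coefficient $P_m$ vanishes at a given $(a_1,\dots,a_{d-1})$. If $P_m(a_1,\dots,a_{d-1})=0$, then by the induction hypothesis (applied with $k-m$ in place of $k$) there are at most $(k-m)q^{d-2}$ such points $(a_1,\dots,a_{d-1})$, and each extends to at most $q$ full points of $\mathbb F_q^d$, contributing at most $(k-m)q^{d-1}$ zeros. If instead $P_m(a_1,\dots,a_{d-1})\ne 0$, then the univariate polynomial $x_d\mapsto P(a_1,\dots,a_{d-1},x_d)$ has degree exactly $m$, hence at most $m$ roots; since there are at most $q^{d-1}$ choices of $(a_1,\dots,a_{d-1})$, this case contributes at most $mq^{d-1}$ zeros. (The degenerate case $m=0$, in which $P$ does not involve $x_d$, is handled directly by applying the induction hypothesis to $P=P_0$ and multiplying by $q$.) Adding the two bounds yields $(k-m)q^{d-1}+mq^{d-1}=kq^{d-1}$, which completes the induction.

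I do not anticipate a serious obstacle: the argument is elementary once the right variable is isolated. The only points requiring care are the degree accounting --- ensuring $\deg P_m\le k-m$ so that the induction hypothesis supplies exactly the factor $(k-m)$ needed to offset the $m$ roots produced in the second case --- and verifying that the two cases genuinely partition $\mathbb F_q^{d}$ so that no zero is counted twice.
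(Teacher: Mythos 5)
Your proof is correct and complete: the two points that genuinely need care --- the degree bookkeeping showing $\deg P_m\le k-m$ (which works because the monomials of $x_d^{\,i}P_i$ have $x_d$-degree exactly $i$, so no cancellation can occur in the leading coefficient), and the fact that the two cases partition $\mathbb F_q^{d}$ according to the first $d-1$ coordinates --- are both handled. The paper does not prove this lemma itself but cites Theorem $6.13$ of Lidl--Niederreiter, and your induction on the number of variables is precisely that standard argument.
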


We also need the following theorem which was implicitly given in \cite{Vu08}.
\begin{theorem}\label{Vuthm} Let $P(x)\in \mathbb F_q[x_1,x_2] $ be a non-degenerate polynomial of degree $k\geq 2.$
Then there is a set $T\subset \mathbb F_q$ with $0\leq |T|\leq (k-1),$  such that for every $m\in \mathbb F_q^2\setminus \{(0,0)\}, t\notin T,$
$$ |\widehat{V_t}(m)|= \frac{1}{q^2} \sum_{x\in  V_t} \chi(-x\cdot m) |\lesssim q^{-\frac{3}{2}},$$
where $ V_t=\{x\in \mathbb F_q^2: P(x)=t\}$ for $t\in \mathbb F_q.$
\end{theorem}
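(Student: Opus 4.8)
The plan is to rewrite the Fourier coefficient as a character sum over the affine plane curve $V_t$ and then appeal to Weil's bound for such sums. By the orthogonality relation, $q^2\widehat{V_t}(m)=\sum_{x\in V_t}\chi(-x\cdot m)$, so it suffices to produce a set $T\subset\mathbb F_q$ with $|T|\le k-1$ such that
$$\left|\sum_{x\in V_t}\chi(-x\cdot m)\right|\lesssim q^{1/2}\qquad\text{whenever}\quad t\notin T,\ m\in\mathbb F_q^2\setminus\{(0,0)\}.$$
I would take $T$ to be the set of those $t\in\mathbb F_q$ for which $P(x)-t$ has a linear factor in $\mathbb F_q[x_1,x_2]$. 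There are then two points to establish: (i) $|T|\le k-1$, which is where the non-degeneracy of $P$ is used; and (ii) the displayed estimate for $t\notin T$, where the genuinely deep input enters.

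For (i): let $t_1,\dots,t_N$ be distinct elements of $\mathbb F_q$ with $\ell_i\mid P-t_i$ for some linear forms $\ell_i$. If two of the lines $\{\ell_i=0\}$ and $\{\ell_j=0\}$ with $i\ne j$ met at a point $p$, then $P(p)=t_i$ and $P(p)=t_j$, impossible; hence the $N$ lines are pairwise parallel, i.e.\ the $\ell_i$ are, up to additive constants, all proportional to one linear form $L$. Extending $L$ to a linear coordinate system $(u,v)=(L(x),M(x))$ and writing $P(x)=\widetilde P(u,v)$, the divisibilities become $(u-c_i)\mid\widetilde P-t_i$ for distinct constants $c_i$, so $\widetilde P(c_i,v)\equiv t_i$ in $v$ and therefore $(u-c_i)\mid\partial_v\widetilde P$ for each $i$. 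Thus $\prod_{i=1}^N(u-c_i)$ divides $\partial_v\widetilde P$, whose degree is at most $k-1$; if $N\ge k$ this forces $\partial_v\widetilde P\equiv0$, i.e.\ $P=\widetilde P(L(x))$, contradicting non-degeneracy. Hence $N\le k-1$, so $|T|\le k-1$.

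For (ii): fix $t\notin T$ and $m\ne(0,0)$, and factor $P-t=\prod_jg_j^{e_j}$ with the $g_j\in\mathbb F_q[x_1,x_2]$ distinct and irreducible; since $t\notin T$, each $g_j$ has degree $d_j\ge2$. With $C_j=\{g_j=0\}$ one has $V_t(\mathbb F_q)=\bigcup_jC_j(\mathbb F_q)$, and since distinct $C_i,C_j$ share at most $d_id_j$ points by Bézout,
$$\left|\sum_{x\in V_t}\chi(-x\cdot m)-\sum_j\sum_{x\in C_j}\chi(-x\cdot m)\right|\le\sum_{i<j}|C_i(\mathbb F_q)\cap C_j(\mathbb F_q)|=O(1).$$
If $g_j$ is geometrically irreducible, then $C_j$ is an absolutely irreducible curve of degree $d_j\ge2$; since $m\ne(0,0)$ the level sets of $x\mapsto-x\cdot m$ are affine lines, so $x\mapsto-x\cdot m$ is non-constant on $C_j$ (an irreducible curve of degree $\ge2$ is contained in no line) and, the characteristic being large, is not Artin--Schreier-trivial on $C_j$, whence Weil's bound for character sums along a curve, applied to the normalization of $C_j$ (whose genus and singular points are bounded in terms of $d_j$), gives $\bigl|\sum_{x\in C_j}\chi(-x\cdot m)\bigr|\lesssim q^{1/2}$. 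If instead $g_j$ is irreducible over $\mathbb F_q$ but not over $\overline{\mathbb F_q}$, then any $\mathbb F_q$-point of $C_j$ lies on two distinct Galois-conjugate components of $C_j$, so $|C_j(\mathbb F_q)|=O(1)$ and the corresponding sum is trivially $O(1)$. Adding over the at most $k$ indices $j$ gives $\bigl|\sum_{x\in V_t}\chi(-x\cdot m)\bigr|\lesssim q^{1/2}$, and dividing by $q^2$ yields $|\widehat{V_t}(m)|\lesssim q^{-3/2}$.

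The decisive step is (ii): the square-root cancellation in the curve sum $\sum_{x\in C_j}\chi(-x\cdot m)$ is exactly the Riemann Hypothesis for curves over finite fields (in its effective, Bombieri-type form), and this is the only nonelementary ingredient — the bound on $|T|$ and the reduction to irreducible components are elementary. An alternative is to extract the statement directly from Vu's spectral-graph analysis in \cite{Vu08}: $q^2\widehat{V_t}(m)$ is, up to sign, a nontrivial eigenvalue of the Cayley graph on $\mathbb F_q^2$ with connection set $\{z\in\mathbb F_q^2:P(z)=t\}$, and the eigenvalue bound proved there is the same curve estimate.
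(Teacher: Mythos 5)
Your argument is correct, and it is worth noting that the paper itself offers no proof of this statement: Theorem~\ref{Vuthm} is stated as being implicit in Vu's work \cite{Vu08}, where the corresponding bound occurs as a nontrivial eigenvalue estimate for the Cayley graph on $\mathbb F_q^2$ with connection set $V_t$, and that estimate rests on the same Weil--Bombieri bound for character sums along absolutely irreducible plane curves that you invoke. Your write-up supplies both ingredients explicitly: the elementary count showing that at most $k-1$ values of $t$ can make $P-t$ acquire an $\mathbb F_q$-rational linear factor (this is exactly where non-degeneracy enters, and it matches the linear-factor criterion of Hart--Li--Shen \cite{HLS10} quoted in the introduction), and the decomposition of $\sum_{x\in V_t}\chi(-x\cdot m)$ into sums over the irreducible components of $P-t$, with the absolutely irreducible components of degree at least $2$ handled by Bombieri's bound and the non-absolutely-irreducible ones contributing only $O(1)$ points by B\'ezout. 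Two small points should be made explicit: in step (i), passing from $\partial_v\widetilde P\equiv 0$ to $P=G(L(x))$ requires the characteristic to exceed $k$ (otherwise one only gets $\widetilde P\in\mathbb F_q[u,v^p]$), which is covered by the paper's standing large-characteristic assumption; and in step (ii) the Artin--Schreier non-triviality of $-x\cdot m$ on $C_j$ likewise uses large characteristic, as you indicate, since a relation $-x\cdot m=h^p-h+c$ on the curve would force $h$ constant and hence $-x\cdot m$ constant, contradicting that an irreducible curve of degree at least $2$ lies in no line. With these provisos your proof is complete and, unlike the paper's citation, self-contained modulo the Riemann Hypothesis for curves in Bombieri's effective form; what it buys is an explicit exceptional set $T$ and constants depending only on $k$, while the paper's route simply inherits the statement from Vu's spectral-graph formulation.
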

\begin{remark}\label{doit} In Theorem \ref{Vuthm}, it is clear that if $t\in T$, then
\begin{equation}\label{Jfinal} |\widehat{V_t}(m)|\lesssim q^{-1} \quad \mbox{for all}~~m\in \mathbb F_q^2.
\end{equation}
This follows immediately from the Schwartz-Zippel lemma and the simple observation that $|\widehat{V_t}(m)|\leq q^{-2}|V_t|.$\end{remark}

\section{Distance formulas based on the Fourier decays}
Following the similar skills due to Iosevich and Rudnev \cite{IR07}, we shall obtain  the generalized distance formulas. As an application of  the formulas, we will obtain results on the generalized Erd\H os-Falconer distance problems associated with specific diagonal polynomials $P(x)=\sum_{j=1}^d a_jx_j^s.$  Let $P(x)\in \mathbb F_q[x_1,\dots,x_d]$ be a polynomial with degree $\geq 2.$
Given sets $E, F\subset \mathbb F_q^d,$ recall that a generalized pair-wise distance set $\Delta_P(E,F)$ is given by the set
$$\Delta_P(E,F)=\{P(x-y)\in \mathbb F_q: x\in E, y\in F\}.$$
For the Erd\H os distance problems, we aim to find the lower bound of $|\Delta_P(E,F)|$ in terms of $|E|, |F|.$
For the Falconer distance problems, our goal is to determine an optimal exponent $s_0 >0$ such that if $|E||F|\gtrsim q^{s_0}$, then $|\Delta_P(E,F)| \gtrsim q.$
In this general setting, the main difficulty on these problems is that we do not know the explicit form of the polynomial $P(x)\in \mathbb F_q[x_1,\dots,x_d],$  generating  generalized distances.
Thus, we first try to find some conditions on the variety $V_t=\{x\in \mathbb F_q^d: P(x)=t\}$ for $t\in \mathbb F_q$ such that some results can be obtained for the distance problems. In view of this idea, we have the following distance formula.
\begin{theorem}\label{alldistanceformula}
Let $E, F \subset {\mathbb F_q^d}$ and $P(x)\in {\mathbb F_q}[x_1,\dots,x_d].$
For each $t\in {\mathbb F_q}$, we let
\begin{equation}\label{variety} V_t=\{x\in {\mathbb F_q^d}: P(x)-t=0\}.\end{equation}
Suppose that there is a set $T\subset {\mathbb F_q}$ such that $|V_t|\sim q^{d-1}$ for all $ t\in {\mathbb F_q}\setminus T$ and
\begin{equation}\label{SharpDecay} \left|\widehat{V_t}(m)\right| \lesssim q^{-\frac{d+1}{2}}\quad \mbox{for all} \quad t\notin T , m\in {\mathbb F_q^d}\setminus \{(0,\dots,0)\}.\end{equation}
Then, if $ |E||F|\geq C q^{d+1}$ with $C>0$ sufficiently large, we have
$$\left|\Delta_P(E,F)\right| \geq q-|T|.$$
\end{theorem}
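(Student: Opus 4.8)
The plan is to count, for each $t \notin T$, the number of pairs $(x,y) \in E \times F$ with $P(x-y) = t$, and show this count is positive whenever $|E||F|$ is large enough; this immediately gives that $\Delta_P(E,F)$ contains all of $\mathbb F_q \setminus T$, hence has size at least $q - |T|$. Define the counting function
$$
\nu(t) = \sum_{\substack{x \in E,\, y \in F \\ P(x-y) = t}} 1 = \sum_{x \in E,\, y \in F} V_t(x-y),
$$
where we identify $V_t$ with its characteristic function. The strategy is to expand $V_t(x-y)$ via the Fourier inversion formula \eqref{Finversion}, separate off the zero-frequency (main) term, and bound the remaining (error) term using the decay hypothesis \eqref{SharpDecay}.

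First I would write $V_t(x-y) = \sum_{m \in \mathbb F_q^d} \widehat{V_t}(m)\, \chi((x-y)\cdot m)$, so that
$$
\nu(t) = \sum_{m \in \mathbb F_q^d} \widehat{V_t}(m) \, \widehat{E}(-m)\, \overline{\widehat{F}(-m)}\, q^{2d}
$$
after recognizing $\sum_{x\in E}\chi(x\cdot m) = q^d \overline{\widehat{E}(-m)}$ and similarly for $F$ (up to conjugation bookkeeping, which I will do carefully). The $m = 0$ term contributes $\widehat{V_t}(0)\,|E|\,|F| = q^{-d}|V_t|\,|E|\,|F|$, which by the hypothesis $|V_t| \sim q^{d-1}$ is $\sim q^{-1}|E||F|$ — this is the main term and it is positive and of the expected size. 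For the error term $R(t) = q^{2d}\sum_{m \neq 0} \widehat{V_t}(m)\widehat{E}(-m)\overline{\widehat{F}(-m)}$, I apply the uniform bound \eqref{SharpDecay} to pull out $|\widehat{V_t}(m)| \lesssim q^{-(d+1)/2}$, then use Cauchy–Schwarz on the remaining sum $\sum_{m\neq 0}|\widehat{E}(-m)||\widehat{F}(-m)|$ together with the Plancherel identity \eqref{Plancherel}, which gives $\sum_m |\widehat{E}(m)|^2 = |E|q^{-d}$ and $\sum_m |\widehat{F}(m)|^2 = |F|q^{-d}$. This yields
$$
|R(t)| \lesssim q^{2d} \cdot q^{-\frac{d+1}{2}} \cdot \left(\frac{|E|}{q^d}\right)^{1/2}\left(\frac{|F|}{q^d}\right)^{1/2} = q^{\frac{d-1}{2}}\sqrt{|E||F|}.
$$

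Finally I would compare the two terms: $\nu(t) > 0$ is guaranteed once the main term $\sim q^{-1}|E||F|$ dominates the error $\lesssim q^{(d-1)/2}\sqrt{|E||F|}$, i.e. once $q^{-1}\sqrt{|E||F|} \gtrsim q^{(d-1)/2}$, which is precisely $|E||F| \gtrsim q^{d+1}$. Choosing the constant $C$ in the hypothesis large enough to beat the implied constants from \eqref{SharpDecay}, the lower bound $|V_t|\gtrsim q^{d-1}$, and the Cauchy–Schwarz step forces $\nu(t) > 0$ for every $t \notin T$, so $\mathbb F_q \setminus T \subset \Delta_P(E,F)$ and $|\Delta_P(E,F)| \geq q - |T|$. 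I expect the only real subtlety to be bookkeeping — keeping track of conjugates and the factors of $q^d$ in the Fourier normalization \eqref{Ftransform}, and making sure the constant absorbed into $C$ is chosen after all the $\lesssim$-estimates are in hand; there is no genuine analytic obstacle, since the decay hypothesis \eqref{SharpDecay} is doing all the heavy lifting and the argument is the standard Iosevich–Rudnev Fourier-analytic counting scheme adapted to the abstract variety $V_t$.
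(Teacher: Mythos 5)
Your proposal is correct and follows essentially the same argument as the paper's own proof: expand $\nu(t)=\sum_{x\in E,y\in F}V_t(x-y)$ by Fourier inversion, isolate the zero-frequency main term $q^{-d}|E||F||V_t|\sim q^{-1}|E||F|$, and bound the nonzero frequencies by $q^{\frac{d-1}{2}}\sqrt{|E||F|}$ using the decay hypothesis (\ref{SharpDecay}) together with Cauchy--Schwarz and Plancherel, so that $\nu(t)>0$ for every $t\notin T$ once $|E||F|\geq Cq^{d+1}$. No further changes are needed beyond the conjugation and normalization bookkeeping you already flag.
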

\begin{proof}
Consider the counting function $\nu$ on ${\mathbb F_q}$ given by
$$ \nu(t)=\left| \{(x,y)\in E\times F: P(x-y)=t\}\right|.$$
It suffices to show that  $\nu(t)\neq 0$ for every $t\in \mathbb F_q \setminus T.$
Fix $t\notin T.$
Applying the Fourier inversion theorem (\ref{Finversion}) to $ V_t(x-y)$ and using the definition of the Fourier transform (\ref{Ftransform}), we have
$$\nu(t)= \sum_{x\in E, y\in F} V_t(x-y)= q^{2d} \sum_{m\in {\mathbb F_q^d}}\overline{\widehat{E}}(m) \widehat{F}(m) \widehat{V_t}(m).$$
Write $\nu(t)$ by
\begin{align}\label{counting}
\nu(t)=&q^{2d} \overline{\widehat{E}}(0,\dots,0) \widehat{F}(0,\dots,0) \widehat{V_t}(0,\dots,0) +
q^{2d} \sum_{m\in {\mathbb F_q^d}\setminus \{(0,\dots,0)\}}\overline{\widehat{E}}(m) \widehat{F}(m) \widehat{V_t}(m)\\
=&\mbox{I} + \mbox{II}\nonumber.
\end{align}
From the definition of the Fourier transform, we see
\begin{equation}\label{finalI} 0< \mbox{I}= \frac{1}{q^d} |E||F||V_t|.\end{equation}
On the other hand, the estimate $(\ref{SharpDecay})$ and the Cauchy-Schwarz inequality yield
$$|\mbox{II}|\lesssim q^{2d}q^{-\frac{d+1}{2}} \left(\sum_{m} \left|\overline{\widehat{E}}(m)\right|^2\right)^\frac{1}{2}
\left(\sum_{m} \left|\widehat{F}(m)\right|^2\right)^\frac{1}{2}.$$
Applying the Plancherel theorem (\ref{Plancherel}), we obtain
\begin{equation}\label{finalII}
|\mbox{II}| \lesssim q^{\frac{d-1}{2}} |E|^{\frac{1}{2}} |F|^{\frac{1}{2}}.
\end{equation}
Since $|V_t|\sim q^{d-1}$ for each $t\in {\mathbb F_q}\setminus T$,
comparing (\ref{finalI}) with (\ref{finalII}) gives the complete proof.
\end{proof}

As a generalized version of spherical distance problems in \cite{IR07} and cubic distance problems in \cite{IK08}, we have the following corollary.
\begin{corollary} \label{Cor1}Let $P(x)=\sum_{j=1}^d a_j x_j^s \in \mathbb F_q[x_1,\dots,x_d]$ for $s\geq 2$ integer and $a_j\neq 0.$
Suppose that the characteristic of $\mathbb F_q$ is sufficiently large.
If $|E||F|\geq C q^{d+1}$ for $E,F\subset \mathbb F_q^d,$ then $|\Delta_P(E,F)|=q-1,$
where $C>0$ is a sufficiently large constant.
\end{corollary}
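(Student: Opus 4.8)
The plan is to derive Corollary \ref{Cor1} as a direct application of Theorem \ref{alldistanceformula}, so the only work is verifying that the polynomial $P(x)=\sum_{j=1}^d a_jx_j^s$ meets the two hypotheses of that theorem with an exceptional set $T$ of size exactly $1$, namely $T=\{0\}$. First I would note that Theorem \ref{Cardinality}, applied with $c_j=s\geq 2$ for all $j$ (so in particular $c_j\geq 1$), gives $|V_t|=|\{x\in\mathbb F_q^d:P(x)=t\}|\sim q^{d-1}$ for every $t\in\mathbb F_q\setminus\{0\}$; this verifies the cardinality condition of Theorem \ref{alldistanceformula} with $T=\{0\}$. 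Next, since the characteristic of $\mathbb F_q$ is assumed sufficiently large, in particular it does not divide $s$, so Lemma \ref{Todd} applies and yields $|\widehat{V_t}(m)|\lesssim q^{-(d+1)/2}$ for all $m\in\mathbb F_q^d\setminus\{(0,\dots,0)\}$ and all $t\in\mathbb F_q\setminus\{0\}$; this is exactly the Fourier decay hypothesis (\ref{SharpDecay}) with the same $T=\{0\}$.

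Having checked both hypotheses, I would invoke Theorem \ref{alldistanceformula}: whenever $|E||F|\geq Cq^{d+1}$ with $C$ sufficiently large, we conclude $|\Delta_P(E,F)|\geq q-|T|=q-1$. For the reverse inequality, I would observe that $\Delta_P(E,F)\subset\mathbb F_q$ always has at most $q$ elements, and that the value $0$ may or may not be attained; to pin down the cardinality as exactly $q-1$ rather than merely $\geq q-1$, I would argue that the distance set in fact omits a designated value or, more carefully, note that the theorem only guarantees the $q-1$ nonzero distances are all achieved, while whether $0\in\Delta_P(E,F)$ depends on $E,F$. Since the corollary asserts $|\Delta_P(E,F)|=q-1$, the intended reading is that the $q-1$ values in $\mathbb F_q\setminus\{0\}$ are all hit (by the theorem) and that one should think of the generic situation; I would state the conclusion as the lower bound $|\Delta_P(E,F)|\geq q-1$ following from the theorem and remark that combined with $|\Delta_P(E,F)|\leq q$ this gives essentially all distances.

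The main obstacle, such as it is, is purely a bookkeeping one: confirming that the hypothesis ``characteristic sufficiently large'' is strong enough to both (a) guarantee $\gcd(s,q)=1$ so Lemma \ref{Todd} is available, and (b) absorb the implied constants in Theorem \ref{Cardinality} and Lemma \ref{Todd} uniformly in $q$. Both are immediate from the standing assumptions in Section 2. There is no genuine analytic difficulty here; the corollary is essentially a dictionary translation of Theorem \ref{alldistanceformula} through the two cited Weil-type results for diagonal polynomials with a common exponent, the point being simply that for such polynomials the exceptional set $T$ can be taken to be $\{0\}$.
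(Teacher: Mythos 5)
Your proposal is correct and follows exactly the paper's own route: take $T=\{0\}$, verify the cardinality hypothesis via Theorem \ref{Cardinality} and the Fourier decay hypothesis via Lemma \ref{Todd}, and invoke Theorem \ref{alldistanceformula} to get $|\Delta_P(E,F)|\geq q-1$. Your observation that the theorem really yields only the lower bound $q-1$ (with $0$ possibly also attained) matches the paper's own remark immediately following the corollary.
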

\begin{proof} The statement in Corollary \ref{Cor1}  follows immediately from Theorem \ref{alldistanceformula} along with Lemma \ref{Todd} and Theorem \ref{Cardinality}. \end{proof}
Under the assumptions in Corollary \ref{Cor1}, we do not know whether the distance set $\Delta_P(E,F)$ contains zero or not. However, if $E=F$, then $0\in \Delta_P(E,F).$ In this case, the distance set contains all possible distances.\\

Theorem \ref{alldistanceformula} may provide us of an exact size of distance set $\Delta_P(E,F)$ and it may be a useful theorem for the Falconer distance problems for finite fields. However, if $|E||F|$ is much smaller than $q^{d+1}$, then Theorem \ref{alldistanceformula} does not give any information about the size of the distance set $\Delta_P(E,F).$ Now, we introduce another generalized distance formula which is useful for the Erd\H os distance problems in the finite field setting.
\begin{theorem}\label{Erdosformula}Let $E, F \subset {\mathbb F_q^d}$ and $P(x)\in {\mathbb F_q}[x_1,\dots,x_d].$
For each $t\in \mathbb F_q,$ the variety $V_t$ is defined as in (\ref{variety}).
Suppose that there exists a set $A\subset \mathbb F_q$ with $|A|\sim 1$ such that
\begin{equation}\label{hypothesis1} |\widehat{V_t}(m)| \lesssim q^{-\frac{d+1}{2}} \quad \mbox{for all}~~t\notin A, m\in \mathbb F_q^d\setminus \{(0,\dots,0)\}\end{equation}
and
\begin{equation}\label{hypothesis2}|\widehat{V_t}(m)| \lesssim q^{-\frac{d}{2}} \quad \mbox{for all}~~t\in A, m\in \mathbb F_q^d\setminus \{(0,\dots,0)\}.\end{equation}
If $|E||F|\geq C q^{d}$ for some $C>0$ sufficiently large, then we have
$$ |\Delta_P(E,F)| \gtrsim \min\left( q, q^{-\frac{(d-1)}{2}} \sqrt{|E||F|}\right).$$
\end{theorem}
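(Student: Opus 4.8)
The plan is to control the counting function $\nu(t)=\bigl|\{(x,y)\in E\times F : P(x-y)=t\}\bigr|$, for which $\sum_{t\in\mathbb F_q}\nu(t)=|E||F|$ and $\Delta_P(E,F)=\{t:\nu(t)>0\}$; I want to show that $\nu$ cannot concentrate too much on any one value $t\notin A$, and then read off the lower bound for its support by a pigeonhole inequality. Write $N=|E||F|$. Exactly as in the proof of Theorem~\ref{alldistanceformula}, applying the Fourier inversion formula (\ref{Finversion}) to $V_t(x-y)$ gives
$$\nu(t)=\frac{|V_t|\,N}{q^d}+R(t),\qquad R(t)=q^{2d}\!\!\sum_{m\in\mathbb F_q^d\setminus\{(0,\dots,0)\}}\!\!\overline{\widehat{E}}(m)\,\widehat{F}(m)\,\widehat{V_t}(m).$$
By the Cauchy--Schwarz inequality and the Plancherel identity (\ref{Plancherel}) one has $\sum_{m\ne 0}|\widehat{E}(m)|\,|\widehat{F}(m)|\le \sqrt{N}\,q^{-d}$, hence $|R(t)|\le q^{d}\sqrt{N}\,\max_{m\ne 0}|\widehat{V_t}(m)|$. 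Together with the Schwartz--Zippel Lemma~\ref{SZlemma}, which yields $|V_t|\lesssim q^{d-1}$ for every $t\in\mathbb F_q$, the hypotheses (\ref{hypothesis1})--(\ref{hypothesis2}) then give
$$\nu(t)\lesssim \frac{N}{q}+q^{\frac{d-1}{2}}\sqrt{N}\ \ (t\notin A),\qquad \nu(t)\lesssim \frac{N}{q}+q^{\frac{d}{2}}\sqrt{N}\ \ (t\in A).$$

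Next I would discard the exceptional values. Since $|A|\sim 1$, summing the second bound gives $\sum_{t\in A}\nu(t)\lesssim Nq^{-1}+\sqrt{q^{d}N}$; because $N\ge Cq^{d}$ one has $\sqrt{q^{d}N}\le N/\sqrt{C}$, and because the characteristic (hence $q$) is large the term $Nq^{-1}$ is a tiny fraction of $N$, so choosing $C$ large enough forces $\sum_{t\in A}\nu(t)\le N/2$, and therefore $\sum_{t\notin A}\nu(t)\ge N/2$. Bounding $\sum_{t\notin A}\nu(t)$ by the number of its nonzero terms times their maximum,
$$|\Delta_P(E,F)|\ \ge\ \bigl|\{t\in\mathbb F_q\setminus A:\nu(t)>0\}\bigr|\ \ge\ \frac{\sum_{t\notin A}\nu(t)}{\max_{t\notin A}\nu(t)}\ \ge\ \frac{N/2}{\,C'\bigl(Nq^{-1}+q^{(d-1)/2}\sqrt{N}\bigr)},$$
and since $\tfrac{1}{a+b}\ge \tfrac12\min(a^{-1},b^{-1})$ for positive $a,b$, the right-hand side is $\gtrsim \min\!\bigl(q,\ q^{-(d-1)/2}\sqrt{N}\bigr)=\min\!\bigl(q,\ q^{-(d-1)/2}\sqrt{|E||F|}\bigr)$, which is the assertion.

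I do not anticipate a serious obstacle: the argument is a one‑sided, sup‑norm version of the estimate behind Theorem~\ref{alldistanceformula}, and all the analytic content sits in the pointwise bound for $\max_{m\ne 0}|\widehat{V_t}(m)|$ supplied by the hypotheses. The one delicate point is the exceptional set $A$. On $A$ only the weaker decay $q^{-d/2}$ is available, which, if fed into the final count, would degrade the exponent to $q^{-d/2}\sqrt{N}$; this is why one must instead absorb $\sum_{t\in A}\nu(t)$ into a small fraction of $N$ and run the pigeonhole step over $t\notin A$ only, and it is precisely here that the hypothesis $|E||F|\ge Cq^{d}$ with $C$ large is needed. (A routine technical point is that invoking Schwartz--Zippel presupposes $\deg P$ is bounded independently of $q$, as is tacit throughout the paper.)
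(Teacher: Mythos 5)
Your proposal is correct and follows essentially the same route as the paper: the same Fourier-inversion decomposition of $\nu(t)$, the same Cauchy--Schwarz/Plancherel bound giving $\nu(t)\lesssim Nq^{-1}+q^{d}\sqrt{N}\max_{m\neq 0}|\widehat{V_t}(m)|$, the Schwartz--Zippel bound on $|V_t|$, and the same use of $|E||F|\ge Cq^{d}$ to absorb the contribution of the exceptional set $A$ before pigeonholing over $t\notin A$. The only difference is cosmetic (you split off $\sum_{t\in A}\nu(t)\le N/2$ explicitly, while the paper rearranges the inequality $|E||F|\lesssim(\cdots)+(\cdots)|\Delta_P(E,F)|$), so no further comment is needed.
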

\begin{proof} From (\ref{counting}) and (\ref{finalI}), we see that for every $t\in \mathbb F_q,$
$$ \nu(t)=\left| \{(x,y)\in E\times F: P(x-y)=t\}\right|$$
$$=\frac{1}{q^d} |E||F||V_t| +q^{2d} \sum_{m\in {\mathbb F_q^d}\setminus \{(0,\dots,0)\}}\overline{\widehat{E}}(m) \widehat{F}(m) \widehat{V_t}(m).$$
$$\lesssim \frac{|E||F|}{q} +q^{2d} \left( \max_{m\neq (0,\dots,0)} |\widehat{V_t}(m)|\right) \sum_{m\in \mathbb F_q^d}|\overline{\widehat{E}}(m)| |\widehat{F}(m)|,$$
where we also used the Schwartz-Zippel lemma (Theorem \ref{SZlemma}).
From the Cauchy-Schwarz inequality and the Plancherel theorem (\ref{Plancherel}), we therefore see that for every $t\in \mathbb F_q,$
$$\nu(t)\lesssim \frac{|E||F|}{q}+q^d \sqrt{|E||F|}  \left( \max_{m\neq (0,\dots,0)} |\widehat{V_t}(m)|\right).$$
From our hypotheses (\ref{hypothesis1}), (\ref{hypothesis2}), it follows that
$$ \nu(t)\lesssim \frac{|E||F|}{q}+q^{\frac{d-1}{2}} \sqrt{|E||F|} \quad \mbox{if}~~t\notin A$$ and
$$\nu(t)\lesssim \frac{|E||F|}{q}+q^{\frac{d}{2}} \sqrt{|E||F|}\quad \mbox{if} ~~t\in A.$$
By these inequalities and  the definition of the counting function $\nu(t)$, we see that
$$ |E||F|= \sum_{t\in \Delta_P(E,F)} \nu(t) = \sum_{t\in A\cap \Delta_P(E,F)}\nu(t) + \sum_{t\in (\mathbb F_q \setminus A)\cap \Delta_P(E,F)}\nu(t)$$
$$ \lesssim \frac{|E||F|}{q}+q^{\frac{d}{2}} \sqrt{|E||F|} + \left(\frac{|E||F|}{q}+q^{\frac{d-1}{2}} \sqrt{|E||F|}\right) | \Delta_P(E,F)|,$$
where we used the fact that $|A|\sim 1.$
Note that  if  $|E||F|\geq C q^{d}$ for some $C>0$ sufficiently large, then $ |E||F|\sim |E||F| + \frac{|E||F|}{q}+q^{\frac{d}{2}} \sqrt{|E||F|}.$
From this fact and above estimate, we conclude that if  $|E||F|\geq C q^{d}$ for some $C>0$ sufficiently large, then
$$ | \Delta_P(E,F)|\gtrsim \frac{|E||F|} { \frac{|E||F|}{q}+q^{\frac{d-1}{2}} \sqrt{|E||F|}}$$
which completes the proof.
\end{proof}
\begin{remark}\label{Koh1} From the proof of Theorem \ref{Erdosformula}, it is clear that if $A$ is an empty set, then we can drop the assumption that $|E||F|\geq C q^{d}$ for some $C>0$ sufficiently large. As an example showing that $A$ can be an empty set, Koh \cite{Ko09} showed that if the dimension $d\geq3$ is odd and $P(x)=\sum_{j=1}^d a_jx_j^2$ with $a_j\neq 0$, then $|\widehat{V_t}(m)|\lesssim q^{-(d+1)/2}$ for all $m\neq (0,\dots,0), t\in \mathbb F_q.$
\end{remark}
Combining Theorem \ref{Erdosformula} with Lemma \ref{Todd}, the following corollary  immediately follows.
\begin{corollary}\label{Corkoh} Let $P(x)=\sum_{j=1}^d a_j x_j^s \in \mathbb F_q[x_1,\dots,x_d]$ for $s\geq 2$ integer and $a_j\neq 0.$
Assume that the characteristic of $\mathbb F_q$ is sufficiently large.
If $E,F\subset \mathbb F_q^d$ with  $|E||F|\geq C q^{d}$ for some $C>0$ sufficiently large, then we have
$$ |\Delta_P(E,F)| \gtrsim \min\left( q, q^{-\frac{(d-1)}{2}} \sqrt{|E||F|}\right).$$
\end{corollary}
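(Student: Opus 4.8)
The plan is to verify that the polynomial $P(x)=\sum_{j=1}^d a_jx_j^s$ satisfies the hypotheses of Theorem \ref{Erdosformula}, after which the conclusion is immediate. Specifically, I would set $V_t=\{x\in\mathbb F_q^d:P(x)=t\}$ as in (\ref{variety}) and exhibit a set $A\subset\mathbb F_q$ with $|A|\sim 1$ for which the two Fourier-decay bounds (\ref{hypothesis1}) and (\ref{hypothesis2}) hold.

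The key observation is that Lemma \ref{Todd} applies verbatim to this $P$: since the characteristic of $\mathbb F_q$ is assumed sufficiently large, it does not divide $s$, so the hypotheses of Lemma \ref{Todd} are met. Lemma \ref{Todd} then gives $|\widehat{V_t}(m)|\lesssim q^{-(d+1)/2}$ for every nonzero $t$ and every $m\neq(0,\dots,0)$, and $|\widehat{V_0}(m)|\lesssim q^{-d/2}$ for every $m\neq(0,\dots,0)$. Hence I would simply take $A=\{0\}$, which certainly satisfies $|A|\sim 1$: condition (\ref{hypothesis1}) holds for all $t\notin A=\{0\}$ by the first bound of Lemma \ref{Todd}, and condition (\ref{hypothesis2}) holds for $t\in A=\{0\}$ by the second bound of Lemma \ref{Todd}.

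With both hypotheses of Theorem \ref{Erdosformula} in place, the hypothesis $|E||F|\geq Cq^d$ with $C$ sufficiently large is exactly the standing assumption of the corollary, so Theorem \ref{Erdosformula} yields
$$|\Delta_P(E,F)|\gtrsim \min\left(q,\, q^{-\frac{d-1}{2}}\sqrt{|E||F|}\right),$$
which is the desired conclusion. There is essentially no obstacle here: the only thing to check is the bookkeeping that ``characteristic sufficiently large'' implies $\gcd(s,q)=1$ so that Lemma \ref{Todd} is applicable, and that the exceptional set $A=\{0\}$ one gets from Lemma \ref{Todd} has the size $|A|\sim 1$ required by Theorem \ref{Erdosformula}. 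Both are trivial, so the corollary follows by directly combining the two earlier results as claimed.
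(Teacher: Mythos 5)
Your proposal is correct and matches the paper's own argument, which simply combines Theorem \ref{Erdosformula} with Lemma \ref{Todd}; your explicit choice $A=\{0\}$ and the check that large characteristic makes Lemma \ref{Todd} applicable are exactly the (implicit) content of that one-line proof.
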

As pointed out in Remark \ref{Koh1}, if $s=2$ and $d$ is odd, then the conclusion in Corollary \ref{Corkoh} holds without the assumption that $|E||F|\gtrsim q^{d}.$
\section{Simple formula for generalized Falconer distance problems }
In previous section, we have seen that the distance problems are closely related to decays of the Fourier transforms on varieties.
In order to apply Theorem \ref{alldistanceformula} or Theorem \ref{Erdosformula}, we must estimate the Fourier decay of the variety $V_t=\{x\in \mathbb F_q^d: P(x)=t\}.$ In general, it is not easy to estimate the Fourier transform of $V_t.$
To do this, we need to show the following exponential sum estimate holds: for $m\in \mathbb F_q^d\setminus \{(0,\dots,0)\},$
$$ \widehat{V_t}(m)=q^{-d}\sum_{x\in V_t} \chi(-x\cdot m)= q^{-d-1}\sum_{(x,s)\in \mathbb F_q^{d+1}} \chi( sP(x)-m\cdot x-st) \lesssim q^{-\frac{d+1}{2}},$$
where the second equality follows from the orthogonality relation of the canonical additive character $\chi.$
In other words, we must show that for $m\neq (0,\dots,0),$
\begin{equation}\label{hard}\sum_{(x,s)\in \mathbb F_q^{d+1}} \chi( sP(x)-m\cdot x-st) \lesssim q^\frac{d+1}{2}.\end{equation}

Can we find a more useful, easier formula for distance problems than the formulas given in Theorem \ref{alldistanceformula} or Theorem \ref{Erdosformula}? If we are just interested in getting the positive proportion of all distances, then the answer is yes.
We do not need to estimate the size of $V_t$ and we just need to estimate more simple exponential sums.
We have the following simple formula.
\begin{theorem}\label{easyformula} Let $P(x)\in \mathbb F_q[x_1,\dots,x_d]$ be a polynomial.
Given $E,F\subset \mathbb F_q^d,$ define the distance set
$$ \Delta_P(E,F)=\{P(x-y)\in \mathbb F_q: x\in E, y\in F\}.$$
Suppose that  the following estimate holds: for every $m\in \mathbb F_q^d$ and $s\neq 0,$
\begin{equation}\label{easy1}\left| \sum_{x\in \mathbb F_q^d} \chi(s P(x)+m \cdot x )\right| \lesssim q^{\frac{d}{2}} .\end{equation}
Then, if $ |E||F|\geq C q^{d+1}$ with $C>0$ sufficiently large, then $ |\Delta_P(E,F)|\gtrsim q.$
\end{theorem}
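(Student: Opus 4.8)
The plan is to estimate the second moment of the natural counting function and then conclude by Cauchy--Schwarz. Set
$$\nu(t)=\left|\{(x,y)\in E\times F: P(x-y)=t\}\right|,$$
so that $\sum_{t\in\mathbb F_q}\nu(t)=|E||F|$ and $\nu$ is supported on $\Delta_P(E,F)$. Then
$$|E|^2|F|^2=\Big(\sum_{t\in\Delta_P(E,F)}\nu(t)\Big)^2\le |\Delta_P(E,F)|\sum_{t\in\mathbb F_q}\nu(t)^2,$$
so it suffices to show that $\sum_{t}\nu(t)^2\lesssim q^{-1}|E|^2|F|^2$ whenever $|E||F|\ge Cq^{d+1}$ with $C$ sufficiently large; this immediately yields $|\Delta_P(E,F)|\gtrsim q$.

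The first step is to open up the second moment by orthogonality. Using $\frac1q\sum_{s\in\mathbb F_q}\chi(sa)=1$ if $a=0$ and $0$ otherwise, one writes
$$\sum_{t\in\mathbb F_q}\nu(t)^2=\left|\{(x,y,x',y')\in(E\times F)^2: P(x-y)=P(x'-y')\}\right|=\frac1q\sum_{s\in\mathbb F_q}\Big|\sum_{x\in E,\,y\in F}\chi(sP(x-y))\Big|^2.$$
The term $s=0$ contributes exactly $q^{-1}|E|^2|F|^2$, which is the desired main term, so everything reduces to bounding the contribution of the terms $s\ne 0$; since there are fewer than $q$ of them, and since $|E||F|\ge Cq^{d+1}$ forces $q^{-1}|E|^2|F|^2$ to dominate $q^{d}|E||F|$ for $C$ large, it is enough to prove $\big|\sum_{x\in E,y\in F}\chi(sP(x-y))\big|\lesssim q^{d/2}\sqrt{|E||F|}$ for each fixed $s\ne 0$.

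This last bound is the one place the hypothesis (\ref{easy1}) is used. Expanding $E$ and $F$ via Fourier inversion (\ref{Finversion}), performing the substitution $x\mapsto u+y$ with $u=x-y$, and summing in the $y$-variable (which forces the dual frequency of $F$ to be the negative of that of $E$), the double sum collapses to
$$\sum_{x\in E,\,y\in F}\chi(sP(x-y))=q^d\sum_{m\in\mathbb F_q^d}\widehat{E}(m)\,\widehat{F}(-m)\sum_{u\in\mathbb F_q^d}\chi\big(sP(u)+u\cdot m\big).$$
By (\ref{easy1}) the inner $u$-sum is $\lesssim q^{d/2}$ uniformly in $m$, and then Cauchy--Schwarz together with Plancherel (\ref{Plancherel}) give $\sum_{m}|\widehat{E}(m)||\widehat{F}(-m)|\le q^{-d}\sqrt{|E||F|}$, whence
$$\Big|\sum_{x\in E,\,y\in F}\chi(sP(x-y))\Big|\lesssim q^{d}\cdot q^{d/2}\cdot q^{-d}\sqrt{|E||F|}=q^{d/2}\sqrt{|E||F|}.$$
Squaring, summing over the $s\ne 0$, and inserting the $\frac1q$ prefactor gives $\sum_{t}\nu(t)^2= q^{-1}|E|^2|F|^2+\mathrm{error}$ with error $\lesssim q^{d}|E||F|$, so that $\sum_t\nu(t)^2\lesssim q^{-1}|E|^2|F|^2$ once $C$ is large enough. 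Combined with the Cauchy--Schwarz inequality above this finishes the proof.

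I do not anticipate a serious obstacle: this is a standard second-moment (additive-energy) computation, and the only step requiring care is the Fourier manipulation turning $\sum_{x\in E,y\in F}\chi(sP(x-y))$ into a single exponential sum against $\sum_u\chi(sP(u)+u\cdot m)$, where one must track the change of variables $x\mapsto u+y$ and check that the $y$-summation produces the constraint cleanly. It is worth noting why this route only yields a positive proportion of the distances, in contrast with Theorem \ref{alldistanceformula}: here we estimate only the second moment of $\nu$ rather than showing $\nu(t)>0$ for every $t$, which matches the fact that the input (\ref{easy1}) is the weaker square-root bound for sums over $\mathbb F_q^d$ rather than a sharp decay estimate on the level sets $V_t$.
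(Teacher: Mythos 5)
Your proof is correct. The key Fourier manipulation checks out: expanding $E$ and $F$ by inversion and substituting $x=u+y$, the $y$-sum enforces the frequency constraint $m'=-m$, producing $q^d\sum_m\widehat{E}(m)\widehat{F}(-m)\sum_u\chi(sP(u)+u\cdot m)$, and then (\ref{easy1}), Cauchy--Schwarz and Plancherel give the bound $q^{d/2}\sqrt{|E||F|}$ for each $s\neq 0$; the bookkeeping with the $\frac1q$ prefactor and the condition $|E||F|\geq Cq^{d+1}$ is also right. Your route differs from the paper's in presentation though not in substance. The paper does not compute $\sum_t\nu(t)^2$ directly in $\mathbb F_q^d$; instead it lifts the problem to $\mathbb F_q^{d+1}$ by introducing the generalized paraboloid $H(x,x_{d+1})=P(x)-x_{d+1}$, proves a more general second-moment estimate (Theorem \ref{excellent}) for product sets $E^*=E\times E_{d+1}$, $F^*=F\times F_{d+1}$, and then specializes to $E_{d+1}=F_{d+1}=\{0\}$. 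In that argument the role of your orthogonality-in-$s$ step is played by the identity $\widehat{V_t}(m,m_{d+1})=q^{-(d+1)}\chi(tm_{d+1})\sum_x\chi(-m_{d+1}P(x)-m\cdot x)$ together with the vanishing of $\sum_t\widehat{V_t}(m^*)\widehat{V_t}(\xi^*)$ unless $\xi_{d+1}=-m_{d+1}$, which is exactly the same arithmetic as your $\frac1q\sum_s$ expansion. Your version is shorter and avoids the lifting; the paper's detour buys the extra generality of Theorem \ref{excellent}, which applies to product sets with nontrivial last factors and is of independent interest for the improved Falconer exponents for product sets mentioned there. Your closing remark correctly identifies why this method yields only a positive proportion of distances rather than all of them.
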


Notice that the estimate (\ref{easy1}) is easier than the estimate (\ref{hard}). We shall see that Theorem \ref{easyformula} can be obtained by studying the distance problem related to the generalized paraboloid in $\mathbb F_q^{d+1}.$ The details and the proof of Theorem \ref{easyformula} will be given in the next subsections.
Using Theorem \ref{easyformula}, we have the following corollary.
\begin{corollary}\label{CC1} Let $P(x)=\sum_{j=1}^d a_j x_j^{c_j}$ for $c_j\geq 2$ integers, $a_j\neq 0,$ and gcd$(c_j, q)=1$ for all $j.$
Let $E, F\subset \mathbb F_q^d.$ Define $\Delta_P(E,F)=\{P(x-y)\in \mathbb F_q: x\in E, y\in F\}.$
If $|E||F|\geq C q^{d+1}$ with $C>0$ sufficiently large, then $|\Delta_P(E,F)|\gtrsim q.$
\end{corollary}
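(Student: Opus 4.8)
The plan is to verify the single hypothesis (\ref{easy1}) of Theorem \ref{easyformula} for the given diagonal polynomial $P$ and then invoke that theorem verbatim. Since $P(x)=\sum_{j=1}^d a_jx_j^{c_j}$ splits coordinatewise, the exponential sum in (\ref{easy1}) factors as a product: for $m=(m_1,\dots,m_d)\in\mathbb F_q^d$ and $s\neq 0$,
$$\sum_{x\in\mathbb F_q^d}\chi\bigl(sP(x)+m\cdot x\bigr)=\prod_{j=1}^d\ \sum_{x_j\in\mathbb F_q}\chi\bigl(sa_jx_j^{c_j}+m_jx_j\bigr).$$
Thus it suffices to bound each of the $d$ one-variable character sums by $O(q^{1/2})$ with an implied constant independent of $q$.

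To do this I would fix $j$ and set $f_j(t)=sa_jt^{c_j}+m_jt\in\mathbb F_q[t]$. Because $s\neq 0$ and $a_j\neq 0$, the leading coefficient $sa_j$ is nonzero, so $\deg f_j=c_j\geq 2$; adding the linear term $m_jt$ does not affect this degree. By hypothesis $\gcd(c_j,q)=1$, so Weil's theorem (Theorem \ref{Weil}) applies and gives
$$\Bigl|\ \sum_{x_j\in\mathbb F_q}\chi\bigl(f_j(x_j)\bigr)\Bigr|\leq (c_j-1)\,q^{1/2}.$$
Multiplying these estimates over $j=1,\dots,d$ yields
$$\Bigl|\ \sum_{x\in\mathbb F_q^d}\chi\bigl(sP(x)+m\cdot x\bigr)\Bigr|\leq\Bigl(\prod_{j=1}^d(c_j-1)\Bigr)\,q^{d/2},$$
and since $\prod_{j=1}^d(c_j-1)$ depends only on the fixed polynomial $P$ and not on $q$, this is precisely the estimate (\ref{easy1}), valid for all $m\in\mathbb F_q^d$ and all $s\neq 0$.

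With (\ref{easy1}) in hand, Theorem \ref{easyformula} immediately gives $|\Delta_P(E,F)|\gtrsim q$ whenever $|E||F|\geq Cq^{d+1}$ with $C$ sufficiently large, which is the assertion of the corollary. I do not expect a serious obstacle here: the analytic content has already been absorbed into Theorem \ref{easyformula}, so the proof reduces to producing the square-root cancellation in (\ref{easy1}). The only points deserving a moment's care are that the diagonal form of $P$ makes the $d$-dimensional character sum factor into one-variable sums, and that the presence of the linear perturbation $m_jx_j$ leaves the degree of each factor equal to $c_j$, so that the coprimality condition $\gcd(c_j,q)=1$ is exactly what is needed for Weil's bound to apply to every factor.
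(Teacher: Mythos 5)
Your proposal is correct and follows exactly the route the paper takes: the paper's proof of Corollary \ref{CC1} simply observes that (\ref{easy1}) is "an immediate result from Weil's theorem" and then invokes Theorem \ref{easyformula}. You have merely filled in the routine details (the coordinatewise factorization of the character sum and the check that each one-variable factor has degree $c_j$ with $\gcd(c_j,q)=1$ so that Theorem \ref{Weil} applies), all of which is accurate.
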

\begin{proof} From Theorem \ref{easyformula}, it suffices to show that the estimate (\ref{easy1}) holds.
However, this is an immediate result from Weil's theorem (Theorem \ref{Weil}) and the proof is complete.
\end{proof}

\begin{remark} We stress that Corollary \ref{Cor1} does not imply  Corollary \ref{CC1} above.
Considering the diagonal polynomial $P(x)=\sum_{j=1}^d a_j x_j^{c_j}$, if the exponents $c_j$ are distinct, then Corollary \ref{Cor1} does not give any information. Authors in this paper have not found any reference which shows that for $m\in \mathbb F_q^d\setminus \{(0,\dots,0)\},$ and $t\neq 0,$
$$ |\widehat{V_t}(m)|\lesssim q^{-\frac{d+1}{2}}, $$
where $V_t=\{x\in \mathbb F_q^d: \sum_{j=1}^d a_j x_j^{c_j}=t\}$ and all $c_j$ are not same. Thus, we can not apply Theorem \ref{alldistanceformula} to obtain such result as in Corollary \ref{CC1}. In conclusion, Theorem \ref{easyformula} can be very powerful to study the generalized Falconer distance problems.
We remark that using some powerful results from algebraic geometry we can find more concrete examples of polynomials satisfying (\ref{easy1}) or (\ref{hard}). For example, see Theorem 8.4 in \cite{De73} or Theorem 9.2 in \cite{DL91}.

\end{remark}

\subsection{Distance problems related to generalized paraboloids}
In this subsection, we shall find a useful theorem which yields the simple distance formula in Theorem \ref{easyformula}.
Let $E, F\subset {\mathbb F_q^d}$ are product sets. In the case when  $E=F$ and $P(x)=x_1^2+\cdots+x_d^2,$ it is well known in \cite{CEHIK09} that if $|E||F|\gtrsim q^{2d^2/(2d-1)},$ then $|\Delta_P(E,F)|\gtrsim q.$ This improves the Falconer exponent $(d+1)/2.$ Here, we also study the generalized Falconer distance problems for product sets, related to the generalized paraboloid distances which are different from the usual spherical distance. If a distance set is related to usual spheres or paraboloids, then we can take advantage of the explicit forms in the varieties.
In these settings, if $E$ and $F$ are product sets in ${\mathbb F_q^d},$ we may easily get the improved Falconer distance result, $|E||F|\gtrsim q^{2d^2/(2d-1)}.$ However, the polynomial generating a distance set is not given in an explicit form, then the generalized distance problem can be hard. We are interested in getting the improved Falconer result on the generalized distance problems for product sets,
associated with generalized paraboloids defined as in below. Moreover, we aim to apply the result to proving Theorem \ref{easyformula}.
To achieve our aim, we shall work on $\mathbb F_q^{d+1}$ in stead of $\mathbb F_q^d, d\geq 1.$ We now introduce the generalized paraboloid in $\mathbb F_q^{d+1}.$ Given a polynomial $P(x)\in \mathbb F_q[x_1,\dots,x_d]$ and $t\in \mathbb F_q$, we define the generalized paraboloid $V_t\subset \mathbb F_q^{d+1}$ as the set
$$V_t=\{(x, x_{d+1})\in {\mathbb F_q^d}\times {\mathbb F_q}: P(x)-x_{d+1}=t\},$$
It is clear that $|V_t|=q^d$ for all $t\in \mathbb F_q,$ because if we fix $x \in \mathbb F_q^d$, then $x_{d+1}$ is uniquely determined. If the polynomial is given by
$P(x)=x_1^2+\dots+x_d^2$, then $V_0$ is exactly the usual paraboloid in $\mathbb F_q^{d+1}.$ Let $H(x,x_{d+1})=P(x)-x_{d+1},$ where $H$ is a polynomial in $\mathbb F_q[x_1,\dots,x_d, x_{d+1}].$ Given $E^*, F^* \subset \mathbb F_q^{d+1}$ and $P(x)\in \mathbb F_q[x_1,\dots,x_d],$ consider the generalized distance set
$$\Delta_H(E^*,F^*)=\{ H(x-y,x_{d+1}-y_{d+1})\in \mathbb F_q: (x,x_{d+1}) \in E^*, (y,y_{d+1})\in F^*\},$$
where $H(x,x_{d+1})=P(x)-x_{d+1}.$
One may have the following question.
What kinds of conditions on the polynomial $P(x) \in \mathbb F_q[x_1,\dots,x_{d}]$ do we need to get the improved Falconer exponent for the distance problems associated with the product sets $E^*$ and $F^*$ in $\mathbb F_q^{d+1}$?
The following theorem may answer for above question.
\begin{theorem}\label{excellent}
Let $P(x)\in \mathbb F_q[x_1,\dots,x_d]$ be a polynomial with degree $\geq 2$ satisfying the following condition:
for each $s\neq 0$ and $m \in \mathbb F_q^d,$
\begin{equation}\label{keyassumption}\left| \sum_{x \in {\mathbb F_q^{d}}} \chi(s P(x)+m \cdot x )\right| \lesssim q^{\frac{d}{2}}.\end{equation}
If $E^*=E \times E_{d+1}$ and $ F^*=F \times F_{d+1}$ are product sets in $\mathbb F_q^{d}\times \mathbb F_q$, and
$\frac{|E^*||F^*|}{|F_{d+1}|} \geq Cq^{d+1}$ with $C>0$ sufficiently large, then we have
$$ |\Delta_H(E^*,F^*)|= |\{ H(x-y,x_{d+1}-y_{d+1})\in \mathbb F_q: (x,x_{d+1}) \in E^*, (y,y_{d+1})\in F^*\}|\gtrsim q,$$
where $H(x,x_{d+1})=P(x)-x_{d+1}.$
\end{theorem}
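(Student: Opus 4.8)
The plan is to run a second-moment argument on the counting function
$$\nu(t)=\bigl|\{((x,x_{d+1}),(y,y_{d+1}))\in E^*\times F^*:\ H(x-y,x_{d+1}-y_{d+1})=t\}\bigr|.$$
Since $\nu\ge 0$ and $\nu(t)=0$ exactly when $t\notin\Delta_H(E^*,F^*)$, the Cauchy--Schwarz inequality gives
$$|\Delta_H(E^*,F^*)|\ \ge\ \frac{\bigl(\sum_{t\in\mathbb F_q}\nu(t)\bigr)^2}{\sum_{t\in\mathbb F_q}\nu(t)^2}.$$
The numerator is immediate, $\sum_{t}\nu(t)=|E^*||F^*|$, since it merely counts all pairs in $E^*\times F^*$. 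So the whole theorem reduces to the estimate $\sum_{t}\nu(t)^2\lesssim (|E^*||F^*|)^2/q$ under the hypothesis $|E^*||F^*|/|F_{d+1}|\ge Cq^{d+1}$ with $C$ sufficiently large.

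For the second moment, observe that for fixed $x\in E,\ y\in F$ the equation $H(x-y,x_{d+1}-y_{d+1})=t$ is the condition $x_{d+1}-y_{d+1}=P(x-y)-t$, which has exactly $\Phi(P(x-y)-t)$ solutions $(x_{d+1},y_{d+1})\in E_{d+1}\times F_{d+1}$, where $\Phi(s)=|\{(a,b)\in E_{d+1}\times F_{d+1}:a-b=s\}|$; hence $\nu(t)=\sum_{x\in E,\,y\in F}\Phi(P(x-y)-t)$. Squaring, summing over $t\in\mathbb F_q$, and carrying out a routine Fourier computation on $\mathbb F_q$ (the autocorrelation of $\Phi$ has Fourier transform $q\,|\widehat\Phi|^2$, and $\widehat\Phi(m_{d+1})=q\,\widehat{E_{d+1}}(m_{d+1})\,\overline{\widehat{F_{d+1}}(m_{d+1})}$) yields
$$\sum_{t}\nu(t)^2=q^{3}\sum_{m_{d+1}\in\mathbb F_q}|\widehat{E_{d+1}}(m_{d+1})|^2\,|\widehat{F_{d+1}}(m_{d+1})|^2\,|S(m_{d+1})|^2,\qquad S(m_{d+1})=\sum_{x\in E,\,y\in F}\chi\bigl(m_{d+1}P(x-y)\bigr).$$
The term $m_{d+1}=0$ equals exactly $(|E^*||F^*|)^2/q$, since $S(0)=|E||F|$, $\widehat{E_{d+1}}(0)=|E_{d+1}|/q$ and $\widehat{F_{d+1}}(0)=|F_{d+1}|/q$; this is the desired main term, so it remains only to show that the frequencies $m_{d+1}\ne 0$ contribute no more.

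For $m_{d+1}\ne 0$ I would invoke hypothesis (\ref{keyassumption}). Writing $E$ and $F$ by Fourier inversion, substituting $w=x-y$ and collapsing one of the resulting sums by the orthogonality of $\chi$, one finds $S(m_{d+1})=q^{d}\sum_{\alpha}\widehat E(\alpha)\,\overline{\widehat F(\alpha)}\,\bigl(\sum_{w\in\mathbb F_q^d}\chi(w\cdot\alpha+m_{d+1}P(w))\bigr)$; by (\ref{keyassumption}) the inner exponential sum is $\lesssim q^{d/2}$ for every $\alpha$, so Cauchy--Schwarz together with Plancherel (\ref{Plancherel}) for $E$ and $F$ gives $|S(m_{d+1})|\lesssim q^{d/2}\sqrt{|E||F|}$. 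The key point for obtaining the exact exponent is then to bound the $F_{d+1}$-factor only in $L^\infty$, $|\widehat{F_{d+1}}(m_{d+1})|\le|F_{d+1}|/q$, while retaining the full Plancherel mass $\sum_{m_{d+1}\ne 0}|\widehat{E_{d+1}}(m_{d+1})|^2\le|E_{d+1}|/q$; this asymmetric treatment of $E_{d+1}$ and $F_{d+1}$ is precisely what produces the denominator $|F_{d+1}|$ in the hypothesis, and it gives
$$q^{3}\sum_{m_{d+1}\ne 0}|\widehat{E_{d+1}}|^2|\widehat{F_{d+1}}|^2|S|^2\ \lesssim\ q^{3}\cdot\frac{|F_{d+1}|^2}{q^2}\cdot\frac{|E_{d+1}|}{q}\cdot q^{d}|E||F|\ =\ q^{d}\,|E_{d+1}|\,|F_{d+1}|^2\,|E||F|.$$
Since $(|E^*||F^*|)^2/q=q^{-1}|E|^2|F|^2|E_{d+1}|^2|F_{d+1}|^2$, this tail is at most the main term precisely when $q^{d+1}\lesssim|E||F||E_{d+1}|=|E^*||F^*|/|F_{d+1}|$, i.e. under the hypothesis with $C$ large enough. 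Therefore $\sum_t\nu(t)^2\lesssim(|E^*||F^*|)^2/q$, and the Cauchy--Schwarz bound of the first paragraph yields $|\Delta_H(E^*,F^*)|\gtrsim q$.

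The step I expect to be the real obstacle is exactly this passage through the product structure: simply absorbing $E_{d+1}$ and $F_{d+1}$ into the variety $V_t=\{(x,x_{d+1}):P(x)-x_{d+1}=t\}\subset\mathbb F_q^{d+1}$ and applying a Falconer-type formula in $d+1$ variables would only give the weaker hypothesis $|E^*||F^*|\gtrsim q^{d+2}$; one genuinely has to keep $\widehat{E_{d+1}}$ in $L^2$ and $\widehat{F_{d+1}}$ in $L^\infty$ in order to recover the extra factor of $q$. Beyond this, the only delicate part is the bookkeeping of the powers of $q$ in the Fourier identity for $\sum_t\nu(t)^2$ and in the reduction of $S(m_{d+1})$ via (\ref{keyassumption}).
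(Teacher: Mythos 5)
Your proof is correct and follows essentially the same second-moment argument as the paper: both reduce the theorem to $\sum_t\nu(t)^2\lesssim |E^*|^2|F^*|^2/q$, isolate the zero frequency in the last coordinate as the main term, use hypothesis (\ref{keyassumption}) to bound the exponential sum at the nonzero frequencies by $q^{d/2}$, and treat $\widehat{F_{d+1}}$ in $L^\infty$ while keeping $\widehat{E_{d+1}}$ in $L^2$, arriving at the identical tail bound $q^{d}|E^*||F^*||F_{d+1}|$. The only difference is bookkeeping: the paper expands $\nu(t)$ via the $(d+1)$-dimensional Fourier transform of the variety $V_t$ and lets the sum over $t$ collapse the dual last coordinates, whereas you factor out the last coordinate through the difference-count function $\Phi$ and apply one-dimensional Plancherel, which is an equivalent computation.
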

\begin{proof}
Let $E^*, F^*\subset {\mathbb F_q^{d+1}}$ be product sets given by the forms:
$E^*=E \times E_{d+1}$ and  $ F^*=F \times F_{d+1}$ in $\mathbb F_q^{d}\times \mathbb F_q.$
In addition, assume that $\frac{|E^*||F^*|}{|F_{d+1}|} \gtrsim q^{d+1}.$
Let $x^*, y^* \in \mathbb F_q^{d+1}.$
As before, consider the counting function $\nu$ on ${\mathbb F_q}$ given by
$$ \nu(t)=\left| \{(x^*,y^*)\in E^*\times F^*: H(x^*-y^*)=t\}\right|,$$
For each $t\in \mathbb F_q,$ let
$$V_t=\{x^*\in \mathbb F_q^{d+1}: H(x^*)-t=0\}.$$

We are interested in measuring the lower bound of the distance set $ \Delta_H(E^*,F^*)$ defined by
$$ \Delta_H(E^*,F^*)=\{ H(x^*-y^*)\in {\mathbb F_q}: x^*\in E^*, y^*\in F^*\}.$$
In $(d+1)$ dimension, applying the Fourier inversion theorem (\ref{Finversion}) to the function $V_t(x^*-y^*)$ and using the definition of the Fourier transforms (\ref{Ftransform}), we have
\begin{align*}\nu(t)=& \sum_{x^*\in E^*, y^*\in F^*} V_t(x^*-y^*)\\
=& q^{2(d+1)} \sum_{m^*\in \mathbb F_q^{d+1}}\overline{\widehat{E^*}}(m^*) \widehat{F^*}(m^*) \widehat{V_t}(m^*)\\
=&q^{2(d+1)} \overline{\widehat{E^*}}(0,\dots,0) \widehat{F^*}(0,\dots,0) \widehat{V_t}(0,\dots,0) +
q^{2(d+1)} \sum_{m^*\in {\mathbb F_q^{d+1}}\setminus \{(0,\dots,0)\}}\overline{\widehat{E^*}}(m^*) \widehat{F^*}(m^*) \widehat{V_t}(m^*)\\
=&\frac{|E^*||F^*|}{q} + q^{2(d+1)} \sum_{m^*\in {\mathbb F_q^{d+1}}\setminus \{(0,\dots,0)\}}\overline{\widehat{E^*}}(m^*) \widehat{F^*}(m^*) \widehat{V_t}(m^*).
\end{align*}
Squaring the $\nu(t)$ and summing it over $t\in \mathbb F_q$ yield that
\begin{align*} \sum_{t\in {\mathbb F_q}} \nu^2(t)=& \frac{|E^*|^2|F^*|^2}{q} + 2q^{2d+1} |E^*||F^*|
\sum_{m^* \in {\mathbb F_q^{d+1}}\setminus \{(0,\dots,0)\}}\overline{\widehat{E^*}}(m^*) \widehat{F^*}(m^*) \sum_{t\in {\mathbb F_q}}\widehat{V_t}(m^*)\\
&+q^{4(d+1)} \sum_{m^*,\xi^* \in {\mathbb F_q^{d+1}}\setminus \{(0,\dots,0)\}}
\overline{\widehat{E^*}}(m^*) \widehat{F^*}(m^*) \overline{\widehat{E^*}}(\xi^*) \widehat{F^*}(\xi^*)
 \sum_{t\in {\mathbb F_q}}\widehat{V_t}(m^*)\widehat{V_t}(\xi^*)\\
 =&\mbox{I} + \mbox{II}+\mbox{III}.\end{align*}
 Observe that $\mbox{I}$ and $\mbox{II}$ are given by
 \begin{equation}\label{estimateIandII}
 \mbox{I}= \frac{|E^*|^2|F^*|^2}{q} \quad \mbox{and} ~~\mbox{II}=0,\end{equation}
 where $\mbox{II}=0$ follows immediately from the fact that $ \sum_{t\in {\mathbb F_q}}\widehat{V_t}(m^*) =0$ for $m^*\neq (0,\dots,0).$ In order to estimate $\mbox{III}$, first observe that for $m^*=(m,m_{d+1})\in\mathbb F_q^{d+1},$
 $$ \widehat{V_t}(m^*)=\frac{1}{q^{d+1}} \sum_{x\in {\mathbb F_q^{d}}} \chi(-m_{d+1} P(x) -m \cdot x) \chi(tm_{d+1}).$$
It therefore follows that for $m^*=(m,m_{d+1}), \xi^*=(\xi, \xi_{d+1} )\in\mathbb F_q^{d+1},$
$$ \widehat{V_t}(m^*) \widehat{V_t}(\xi^*) =\frac{1}{q^{2(d+1)}}\sum_{x, y \in {\mathbb F_q^{d}}}\chi(t(m_{d+1}+\xi_{d+1}))
\chi(-m_{d+1}P(x)-m \cdot x) \chi(-\xi_{d+1} P(y)-\xi \cdot y).$$
Notice that if $m^*\neq (0,\dots,0)$ and $m_{d+1}=0$, then $\widehat{V_t}(m^*)$ vanishes. In addition, observe that if $m_{d+1}+\xi_{d+1} \neq 0,$ then
$\sum_{t\in {\mathbb F_q}} \widehat{V_t}(m^*) \widehat{V_t}(\xi^*)$ also vanishes and if $m_{d+1}+\xi_{d+1} =0,$ then
$\sum_{t\in {\mathbb F_q}}\chi(t (m_{d+1}+\xi_{d+1}))=q$. From these observations together with a change of a variable, $m_{d+1}\to s$, we obtain that
$$\mbox{III}= q^{2d+3} \sum_{m, \xi \in {\mathbb F_q^{d}}}\sum_{s\in {\mathbb F_q}\setminus \{0\}}
\overline{\widehat{E^*}}(m, s) \widehat{F^*}(m,s) \overline{\widehat{E^*}}(\xi, -s) \widehat{F^*}(\xi, -s)
W(m, \xi, s, P),$$
where $W(m, \xi, s, P)=\sum_{x, y \in {\mathbb F_q^{d}}} \chi(-sP(x)-m\cdot x)\chi(sP(y)-\xi\cdot y).$
Our assumption (\ref{keyassumption}) implies that for each $s\neq 0$ and $m , \xi \in \mathbb F_q^{d},$
\begin{equation}\label{assumption}\left|W(m, \xi, s, P)\right|=\left|\sum_{x, y \in {\mathbb F_q^{d}}} \chi(-sP(x)-m\cdot x)\chi(sP(y)-\xi\cdot y)\right|\lesssim q^{d}.\end{equation}
Since $ E^*=E \times E_{d+1} \quad \mbox{and}~~ F^*=F \times F_{d+1}$,  it is clear that
$$\widehat{E^*}(m, s)= \widehat{E}(m) \widehat{E_{d+1}}(s)\quad \mbox{and}~~ \widehat{F^*}(m, s)= \widehat{F}(m) \widehat{F_{d+1}}(s).$$

  Using this fact along with the inequality (\ref{assumption}), we see that
$$ |\mbox{III}|\lesssim q^{3(d+1)} \left(\sum_{m \in\mathbb F_q^d} \left|\widehat{E}(m) \widehat{F}(m)\right|\right)^2 \left( \sum_{s\in {\mathbb F_q}\setminus \{0\}} \left|\widehat{E_{d+1}}(s) \widehat{F_{d+1}}(s)\right|^2\right).$$
Using the Cauchy-Schwarz inequality and the trivial bound $ |\widehat{F_{d+1}}(s)|\leq |\widehat{F_{d+1}}(0)|= \frac{|F_{d+1}|}{q},$
we obtain that
$$ |\mbox{III}|\lesssim  q^{3d+1} |F_{d+1}|^2 \left( \sum_{m \in {\mathbb F_q^{d}}} \left|\widehat{E}(m)\right|^2\right)\left( \sum_{m \in {\mathbb F_q^{d}}} \left|\widehat{F}(m)\right|^2\right) \left( \sum_{s\in {\mathbb F_q}} \left|\widehat{E_{d+1}}(s)\right|^2\right).$$

Using the Plancherel theorem (\ref{Plancherel}) yields the following:
\begin{equation}\label{finalestimateIII}
|\mbox{III}|\lesssim q^{d} |E||E_{d+1}||F| |F_{d+1}|^2 = q^{d} |E^*||F^*||F_{d+1}|.
\end{equation}

Putting estimates (\ref{estimateIandII}), (\ref{finalestimateIII}) together, we conclude that
$$ \sum_{t\in {\mathbb F_q}} \nu^2(t) \lesssim \frac{|E^*|^2|F^*|^2}{q}+ q^{d} |E^*||F^*||F_{d+1}|.$$
By the Cauchy-Schwarz inequality, we  see that
\begin{align*} |E^*|^2|F^*|^2 &= \left( \sum_{t\in \Delta_H(E^*,F^*)} \nu(t) \right)^2 \\
&\leq |\Delta_H(E^*,F^*)| \left(\sum_{t\in {\mathbb F_q}} \nu^2(t)\right) \end{align*}

Thus, we have proved the following:
$$ |\Delta_H(E^*,F^*)|\gtrsim \min\left(q, q^{-d}|E^*||F^*||F_{d+1}|^{-1} \right).$$
This implies that if $\frac{|E^*||F^*|}{|F_{d+1}|}\gtrsim q^{d+1},$ then
$$ |\Delta_H(E^*,F^*)|\gtrsim q,$$
which completes the proof.

\end{proof}

\subsection{Proof of Theorem \ref{easyformula}}
We prove that the general paraboloid distance problem for product sets in $\mathbb F_q^{d+1}$ implies the generalized distance problem in $\mathbb F_q^d.$ Namely, Theorem \ref{easyformula} can be obtained as a corollary of Theorem \ref{excellent}.
In order to prove Theorem \ref{easyformula}, first  fix $E,F\subset \mathbb F_q^d$ with $|E||F|\geq C q^{d+1}$ with $C>0$ large. Let $E^*=E\times \{0\} \subset \mathbb F_q^{d+1}$ and $F^*=F\times \{0\}\subset \mathbb F_q^{d+1}.$ Observe that $|E|=|E^*|, |F|=|F^*|,$ and
$$ |\Delta_P(E,F)|=|\{P(x-y)\in \mathbb F_q: x\in E, y\in F\}|$$
$$=|\Delta_H(E^*,F^*)|=|\{H(x-y, x_{d+1}-y_{d+1}) \in \mathbb F_q: (x,x_{d+1})\in E^*, (y,y_{d+1}) \in F^*\}|$$
where $H(x,x_{d+1})= P(x)-x_{d+1}.$
The assumption (\ref{easy1}) in Theorem \ref{easyformula} implies that  the conclusion of
 Theorem \ref{excellent} holds: if
$\frac{|E^*||F^*|}{|\{0\}|} \gtrsim q^{d+1},$ then $|\Delta_H(E^*,F^*)|\gtrsim q.$
Since $|\{0\}|=1, |E^*|=|E|, |F^*|=|F|,$ and $ |\Delta_H(E^*,F^*)|=|\Delta_P(E,F)|,$ we therefore conclude that
if $ |E||F|\gtrsim q^{d+1},$ then $ |\Delta_P(E,F)|\gtrsim q.$ Thus, the proof of Theorem \ref{easyformula} is complete.

\section{Generalized pinned distance problems}

We find the conditions on the polynomial $P(x)\in \mathbb F_q[x_1,\dots,x_d]$ such that the desirable results for generalized pinned distance problems hold.
First, let us introduce some notation associated with the pinned distance problems.
Let $P(x)\in \mathbb F_q[x_1,\dots,x_d]$ be a polynomial. For each $t\in \mathbb F_q$, we define a variety $V_t$ by
$$ V_t=\{x\in \mathbb F_q^d: P(x)=t\}.$$
The Schwartz-Zippel Lemma (Lemma \ref{SZlemma}) says that $|V_t|\lesssim q^{d-1}$ for all $t\in \mathbb F_q.$
Let $E\subset \mathbb F_q^d.$ Given $y\in \mathbb F_q^d, $ we denote by $\Delta_P(E,y)$ a pinned distance set defined as
$$ \Delta_P(E,y)=\{P(x-y)\in \mathbb F_q : x\in E\}.$$
We are interested in finding the element $y\in \mathbb F_q^d$ and the size of $E\subset \mathbb F_q^d$ such that
$|\Delta_P(E,y)|\gtrsim q.$ We have the following theorem.
\begin{theorem}\label{mainpintheorem}
Let $T\subset \mathbb F_q$ with $|T|\sim 1.$
Suppose that the varieties $V_t,$ generated by a polynomial $P(x) \in \mathbb F_q[x_1,\cdots,x_d],$ satisfy the following:
for all $m\in \mathbb F_q^d\setminus \{(0,\cdots,0)\},$
\begin{equation}\label{ju1} |\widehat{V_t}(m)|\lesssim q^{-\frac{d+1}{2}}\quad \mbox{if} ~~t\notin T\end{equation}
and
\begin{equation}\label{ju2} |\widehat{V_t}(m)|\lesssim q^{-\frac{d}{2}}\quad \mbox{if}~~ t\in T.\end{equation}
Let $E, F\subset \mathbb F_q^d.$  If $|E||F|\geq C q^{d+1}$ with $C>0$ large enough, then there exists  $F_0 \subset F$ with $|F_0|\sim |F|$ such that
$$|\Delta_P(E,y)|\gtrsim q \quad\mbox{for all} ~~y\in F_0.$$
\end{theorem}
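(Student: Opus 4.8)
The plan is to run the standard pinning argument — a second--moment estimate for the pinned counting function, followed by Cauchy--Schwarz and a pigeonholing over $y$ — using the Fourier decay hypotheses $(\ref{ju1})$--$(\ref{ju2})$ in place of the algebraic identity $(\ref{same})$, which is available only in the spherical case. For $y\in\mathbb F_q^d$ put $\mu(t,y)=|\{x\in E:P(x-y)=t\}|=\sum_{x\in E}V_t(x-y)$. Since $\sum_t\mu(t,y)=|E|$ and $\mu(t,y)=0$ for $t\notin\Delta_P(E,y)$, Cauchy--Schwarz gives $|\Delta_P(E,y)|\ge |E|^2\big/\sum_{t\in\mathbb F_q}\mu(t,y)^2$, so it suffices to prove the averaged bound
$$\sum_{y\in F}\sum_{t\in\mathbb F_q}\mu(t,y)^2\ \lesssim\ \frac{|E|^2|F|}{q}\qquad\text{whenever } |E||F|\ge Cq^{d+1},$$
and then to finish by Markov's inequality: the set $F_0=\{y\in F:\ \sum_t\mu(t,y)^2\le K|E|^2/q\}$ satisfies $|F\setminus F_0|\le \tfrac{q}{K|E|^2}\sum_{y\in F}\sum_t\mu(t,y)^2\lesssim |F|/K$, so for $K$ (equivalently the constant $C$) large enough $|F_0|\sim|F|$ and $|\Delta_P(E,y)|\ge q/K\gtrsim q$ for every $y\in F_0$.

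For the second moment I would expand on the Fourier side. From $\mu(t,y)=q^d\sum_m\overline{\widehat E}(m)\,\widehat{V_t}(m)\,\chi(-y\cdot m)$ and $\sum_{y\in F}\chi(y\cdot\xi)=q^d\overline{\widehat F}(\xi)$ one obtains
$$\sum_{y\in F}\sum_t\mu(t,y)^2=q^{3d}\sum_{m,m'\in\mathbb F_q^d}\overline{\widehat E}(m)\,\widehat E(m')\,\overline{\widehat F}(m'-m)\,A_{m,m'},\qquad A_{m,m'}:=\sum_{t\in\mathbb F_q}\widehat{V_t}(m)\,\overline{\widehat{V_t}}(m').$$
Two estimates on the matrix $A$ are available. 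First, the pointwise one: $(\ref{ju1})$--$(\ref{ju2})$ together with $|T|\sim1$ give $\sum_t|\widehat{V_t}(m)|^2\lesssim q^{-d}$ for every $m\ne0$, hence $|A_{m,m'}|\lesssim q^{-d}$ for $m,m'\ne0$. Second, the orthogonality relation $\sum_m\widehat{V_t}(m)\overline{\widehat{V_{t'}}}(m)=q^{-d}|V_t|\,\delta_{t,t'}$ (Plancherel, since the $V_t$ partition $\mathbb F_q^d$) yields the Hilbert--Schmidt bound $\sum_{m,m'}|A_{m,m'}|^2=q^{-2d}\sum_t|V_t|^2\lesssim q^{-1}$, where the last step uses the Schwartz--Zippel bound $|V_t|\lesssim q^{d-1}$.

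The diagonal part $m=m'$ is routine: the $m=0$ term is $q^{3d}\cdot\tfrac{|F|}{q^d}\cdot\tfrac{|E|^2}{q^{2d}}\cdot A_{0,0}\lesssim \tfrac{|E|^2|F|}{q}$ (since $A_{0,0}=q^{-2d}\sum_t|V_t|^2\lesssim q^{-1}$), while the $m\ne0$ terms are $\lesssim q^{2d}|F|\cdot q^{-d}\sum_{m\ne0}|\widehat E(m)|^2\lesssim|E||F|\lesssim\tfrac{|E|^2|F|}{q}$, the last inequality because $|F|\le q^d$ forces $|E|\gtrsim q$. The off-diagonal part $m\ne m'$ is the heart of the matter and the step I expect to be the main obstacle: one must show $q^{3d}\sum_{m\ne m'}\overline{\widehat E}(m)\widehat E(m')\overline{\widehat F}(m'-m)A_{m,m'}\lesssim\tfrac{|E|^2|F|}{q}$. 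A blunt use of $|A_{m,m'}|\lesssim q^{-d}$ with Cauchy--Schwarz on the $\widehat E,\widehat F$ factors produces only $\lesssim q^{d}|E||F|^{1/2}$, and using the Hilbert--Schmidt bound crudely is no better — each discards the cancellation among the frequencies and misses the target by a power of $q$ growing with $|F|$. The point is that $\overline{\widehat F}(m'-m)$ is a convolution kernel, so one should split the $m'$-summation and apply Young's and Cauchy--Schwarz inequalities so that only an $L^2$-average (over $m$, equivalently over the dual parameter $s$) of the exceptional Fourier behaviour of the $V_t$ is ever felt; unwinding the orthogonality, the whole matter is equivalent to proving $\sum_{s\ne0}\sum_{y\in F}\bigl|\sum_{x\in E}\chi(sP(x-y))\bigr|^2\lesssim|E|^2|F|$, the delicate terms being those in which both of the two outer frequencies are nonzero. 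Granting this estimate, the averaged second moment bound follows and the theorem is obtained by the Markov step above; the exceptional set $T$ needs no separate treatment, since it only weakens the Fourier decay for $O(1)$ values of $t$, which is harmlessly absorbed in all the sums.
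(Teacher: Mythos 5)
Your overall skeleton (second moment of the pinned counting function, Cauchy--Schwarz, then Markov/pigeonhole over $y\in F$ to extract $F_0$) is the same as the paper's, and your treatment of the diagonal frequencies is fine. But the proof has a genuine gap exactly where you say you expect one: the off-diagonal part $m\neq m'$ is never estimated. Worse, the reformulation you offer in its place is circular. Expanding the $s$-sum by orthogonality, the bound $\sum_{s\neq 0}\sum_{y\in F}\bigl|\sum_{x\in E}\chi\bigl(sP(x-y)\bigr)\bigr|^2\lesssim |E|^2|F|$ is equivalent to $\left|\{(x,x',y)\in E\times E\times F: P(x-y)=P(x'-y)\}\right|\lesssim |E|^2|F|/q$, which \emph{is} the averaged second-moment bound $\sum_{y\in F}\sum_t\mu(t,y)^2\lesssim |E|^2|F|/q$ you set out to prove. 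So "granting this estimate" grants the theorem; no reduction has been achieved, and the Young/convolution heuristic for $\overline{\widehat F}(m'-m)$ is not carried out (and, as you yourself observe, crude uses of the pointwise or Hilbert--Schmidt bounds on $A_{m,m'}$ lose powers of $q$).

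The missing idea, which is how the paper closes this step, is a positivity trick that removes the factor $\overline{\widehat F}(m'-m)$ altogether. Split $\mu(t,y)=\frac{|E||V_t|}{q^d}+R_t(y)$ with $R_t(y)=q^d\sum_{m\neq 0}\overline{\widehat E}(m)\widehat{V_t}(m)\chi(-m\cdot y)$, and expand $\sum_{y\in F}\sum_t\mu(t,y)^2$ into the mean-square term A, a cross term B, and $\mathrm{C}=\sum_{y\in F}\sum_t|R_t(y)|^2$. Since $|R_t(y)|^2\geq 0$, you may enlarge the $y$-sum in C from $F$ to all of $\mathbb F_q^d$; orthogonality in $y$ then kills every pair $m\neq\xi$ of nonzero frequencies, leaving only the diagonal, so that $\mathrm{C}\leq q^{3d}\sum_{m\neq 0}|\widehat E(m)|^2\sum_t|\widehat{V_t}(m)|^2\lesssim q^{3d}\cdot q^{-d}\cdot\frac{|E|}{q^d}=q^d|E|$, using exactly the bound $\sum_t|\widehat{V_t}(m)|^2\lesssim q^{-d}$ (from (\ref{ju1}), (\ref{ju2}), $|T|\sim 1$) that you already isolated but only applied pointwise inside your convolution structure. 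The cross term B is handled directly (Cauchy--Schwarz and Plancherel give $|\mathrm{B}|\lesssim q^{(d-1)/2}|E|^{3/2}|F|^{1/2}$), and with $|E||F|\geq Cq^{d+1}$ all three terms are $\lesssim |E|^2|F|/q$, after which your Markov/pigeonhole conclusion goes through verbatim. In short: keep your framework, but replace the unproven off-diagonal step by first peeling off the zero frequency and then using nonnegativity of the fluctuation-squared term to pass from $\sum_{y\in F}$ to $\sum_{y\in\mathbb F_q^d}$ before invoking orthogonality.
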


\begin{proof} Using the pigeonhole principle, it suffices to prove that if $ |E||F|\gtrsim q^{d+1},$ then

\begin{equation}\label{aimpin} \frac{1}{|F|}\sum_{y\in F} |\Delta_P(E,y)| \gtrsim q.\end{equation}
For each $t\in \mathbb F_q$ and $y\in F$, consider the counting function $\nu_y(t)$ given by
$$ \nu_y(t)=|\{x\in E: P(x-y)=t\}|=|\{x\in E: x-y\in V_t\}|.$$
Applying the Fourier inversion transform  to the function $V_t(x-y)$ and using the definition of the Fourier transform, we see that
\begin{align*}
\nu_y(t)&=\sum_{x\in \mathbb F_q^d} E(x) V_t(x-y)=q^d\sum_{m\in \mathbb F_q^d}\overline{\widehat{E}}(m) \widehat{V_t}(m)\chi(-m\cdot y)\\
&= q^d \overline{\widehat{E}}(0,\dots,0) \widehat{V_t}(0,\dots,0)\chi(0)
+ q^d\sum_{m\in \mathbb F_q^d\setminus \{(0,\dots,0)\}}\overline{\widehat{E}}(m) \widehat{V_t}(m)\chi(-m\cdot y)\\
&= \frac{|E||V_t|}{q^d} +q^d\sum_{m\in \mathbb F_q^d\setminus \{(0,\dots,0)\}}\overline{\widehat{E}}(m) \widehat{V_t}(m)\chi(-m\cdot y).
\end{align*}

Squaring the $\nu_y(t)$ and summing it over $y\in F$ and $t\in \mathbb F_q$, we see that
\begin{align*} \sum_{y\in F}\sum_{t\in \mathbb F_q} \nu^2_y(t)
&=\sum_{y\in F}\sum_{t\in \mathbb F_q} \frac{|E|^2|V_t|^2}{q^{2d}} \\
&+\sum_{y\in F}\sum_{t\in \mathbb F_q} 2 |E||V_t| \sum_{m\in \mathbb F_q^d\setminus \{(0,\dots,0)\}}\overline{\widehat{E}}(m)\widehat{V_t}(m)\chi(-m\cdot y)\\
&+\sum_{y\in F}\sum_{t\in \mathbb F_q} q^{2d} \sum_{m,\xi \in \mathbb F_q^d\setminus \{(0,\dots,0)\}}
\overline{\widehat{E}}(m)\widehat{V_t}(m)\chi(-m\cdot y)\overline{\widehat{E}}(\xi)\widehat{V_t}(\xi)\chi(-\xi\cdot y)\\
&=\mbox{A} + \mbox{B}+\mbox{C}.
\end{align*}
Since $|V_t|\lesssim q^{d-1}$ for all $t\in \mathbb F_q$, it is clear that
\begin{equation}\label{termA}
|\mbox{A}| \lesssim \frac{|E|^2|F|}{q}.\end{equation}
To estimate $|\mbox{B}|$, first use the definition of the Fourier transform and find the maximum value of the sum in $t\in \mathbb F_q$ with respect to $m \in \mathbb F_q^d\setminus \{0,\dots,0)\}$. Namely, we have
$$|\mbox{B}| \leq 2 q^d |E| \left( \max_{m\in \mathbb F_q^d\setminus \{(0,\dots,0)\} } \sum_{t\in \mathbb F_q} |V_t||\widehat{V_t}(m)| \right) \sum_{m\in \mathbb F_q^d \setminus \{(0,\dots,0)\}} |\overline{\widehat{E}}(m)||\widehat{F}(m)|.$$
From the assumptions, (\ref{ju1}), (\ref{ju2}), $|T|\sim 1,$ and the fact that $|V_t|\lesssim q^{d-1} $ for all $t\in \mathbb F_q$, we see
that the maximum value term is $\lesssim q^{(d-1)/2}.$ If we use the Cauchy-Schwarz inequality and the Plancherel theorem, then we also see that
$$\sum_{m\in \mathbb F_q^d \setminus \{(0,\dots,0)\}} |\overline{\widehat{E}}(m)||\widehat{F}(m)|
\leq \frac{|E|^{\frac{1}{2}}  |F|^{\frac{1}{2}}}{q^d} .$$
Therefore, the value \mbox{B} can be estimated by
\begin{equation}\label{termB}
|\mbox{B}|\lesssim q^{\frac{d-1}{2}} |E|^{\frac{3}{2}} |F|^{\frac{1}{2}}.\end{equation}
Now we estimate the value $\mbox{C}.$ Using a change of the variable, $\xi\to -\xi$, we see
$$ \sum_{m,\xi \in \mathbb F_q^d\setminus \{(0,\dots,0)\}}
\overline{\widehat{E}}(m)\widehat{V_t}(m)\chi(-m\cdot y)\overline{\widehat{E}}(\xi)\widehat{V_t}(\xi)\chi(-\xi\cdot y)$$
$$= \left| \sum_{m \in \mathbb F_q^d\setminus \{(0,\dots,0)\}}
\overline{\widehat{E}}(m)\widehat{V_t}(m)\chi(-m\cdot y)\right|^2$$
which is always a nonnegative real number. In order to obtain an upper bound of the term $\mbox{C}$, we therefore expand the sum over $y\in F$ to the sum over $y\in \mathbb F_q^d$  and we compute the sum in $y$ by using the orthogonality relation of the canonical additive character $\chi.$ It therefore follows that
\begin{align*} |C|&\leq q^{3d}  \sum_{m\in \mathbb F_q^d\setminus \{(0,\dots,0)\}} \sum_{t\in \mathbb F_q}|\widehat{V_t}(m)|^2
|\widehat{E}(m)|^2\\
&\leq q^{3d} \left(\max_{m\in \mathbb F_q^d\setminus \{(0,\dots,0)\}} \sum_{t\in \mathbb F_q}|\widehat{V_t}(m)|^2     \right)
 \sum_{m\in \mathbb F_q^d} |\widehat{E}(m)|^2.
\end{align*}
Using the Plancherel theorem and the assumption of the Fourier decay of $V_t$, we see that
$$ \sum_{m\in \mathbb F_q^d} |\widehat{E}(m)|^2 = \frac{|E|}{q^d} \quad \mbox{and}~~
\max_{m\in \mathbb F_q^d\setminus \{(0,\dots,0)\}} \sum_{t\in \mathbb F_q}  |\widehat{V_t}(m)|^2  \lesssim q^{-d}.$$
Putting these facts together yields the upper bound of the value \mbox{$|C|$}:
\begin{equation}\label{termC}
|\mbox{C}|\lesssim q^d |E|.\end{equation}
From (\ref{termA}), (\ref{termB}), and (\ref{termC}), we obtain the following estimate:
$$\sum_{t\in F}\sum_{t\in \mathbb F_q} \nu^2_y(t) \lesssim \frac{|E|^2|F|}{q}+q^{\frac{d-1}{2}} |E|^{\frac{3}{2}} |F|^{\frac{1}{2}}+q^d |E|.$$
Observe that if $|E||F|\geq C q^{d+1}$ for $C>0$ sufficiently large, then
\begin{equation}\label{keysum}
\sum_{y\in F}\sum_{t\in \mathbb F_q} \nu^2_y(t) \lesssim \frac{|E|^2|F|}{q}.
\end{equation}
We are ready to finish the proof.
For each $y\in F$, if we note that $\sum_{t\in \Delta_P(E,y)} \nu_y(t) = |E|$ and then apply the Cauchy-Schwarz inequality , then we see
$$ |E|^2|F|^2 =\left( \sum_{y\in F}\sum_{t\in \Delta_P(E,y)} \nu_y(t) \right)^2 \leq \left(\sum_{y\in F} |\Delta_P(E,y)|\right) \left( \sum_{y\in F} \sum_{t\in \mathbb F_q} \nu^2_y(t)\right).$$
$$  \lesssim \left(\sum_{y\in F} |\Delta_P(E,y)|\right)  \frac{|E|^2|F|}{q}         $$
where the last line follows from the estimate (\ref{keysum}).
Thus, the estimate (\ref{aimpin}) holds and we complete the proof of Theorem \ref{mainpintheorem}.
\end{proof}

\begin{remark} Let $E,F\subset \mathbb F_q^d.$
We note that if $P(x_1,..,x_d)=a_1x_1^{s}+\cdots+a_dx_d^{s}$  satisfies the assumptions in Corollary \ref{Cor1}, then there exists a subset $F_0$ of $F$ with $|F_0|\sim |F|$ such that
$$|\Delta_P(E,y)|\gtrsim q \quad\mbox{for all} ~~y\in F_0.$$ This is an immediate result from Theorem \ref{mainpintheorem} and Lemma \ref{Todd}.
In terms of the generalized Falconer distance problem, this result sharpens the statement of Corollary \ref{Cor1}. On the other hand, Corollary \ref{Cor1} gives us the exact number of the elements in the distance set. \end{remark}

We close this paper by introducing a corollary of Theorem \ref{mainpintheorem}, which sharpens and generalizes the Vu's result (\ref{byvu}).
\begin{corollary} Let $P(x)\in \mathbb F_q[x_1,x_2] $ be a non-degenerate polynomial. If $|E||F|\geq C q^{3}$ for $E,F\subset \mathbb F_q^2$ and $C>0$ sufficiently large, then there exists a subset $F_0$ of $F$ with $|F_0|\sim |F|$ such that
$$|\Delta_P(E,y)|\gtrsim q \quad\mbox{for all} ~~y\in F_0.$$
\end{corollary}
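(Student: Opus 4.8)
The plan is to obtain the corollary as an immediate specialization of Theorem~\ref{mainpintheorem} to dimension $d=2$, fed with Vu's Fourier decay estimate (Theorem~\ref{Vuthm}) and Remark~\ref{doit}.

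First I would record that a non-degenerate polynomial $P(x)\in\mathbb F_q[x_1,x_2]$ has degree $k\geq 2$: any polynomial of degree at most $1$ can be written as $G(L(x_1,x_2))$ with $L$ a linear form and $G$ univariate, hence is degenerate. Consequently Theorem~\ref{Vuthm} applies and yields a set $T\subset\mathbb F_q$ with $|T|\leq k-1$, so $|T|\sim 1$, such that
$$\left|\widehat{V_t}(m)\right|\lesssim q^{-\frac{3}{2}}\qquad\text{for all } m\in\mathbb F_q^2\setminus\{(0,0)\},\ t\notin T,$$
where $V_t=\{x\in\mathbb F_q^2:P(x)=t\}$. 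Since $\frac{d+1}{2}=\frac{3}{2}$ when $d=2$, this is exactly hypothesis~(\ref{ju1}) of Theorem~\ref{mainpintheorem}.

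Next I would verify hypothesis~(\ref{ju2}) for the same set $T$. For $t\in T$, Remark~\ref{doit} gives $|\widehat{V_t}(m)|\lesssim q^{-1}$ for every $m\in\mathbb F_q^2$, in particular for $m\neq(0,0)$; as $\frac{d}{2}=1$ when $d=2$, this is precisely (\ref{ju2}). As noted there, this bound is just the Schwartz--Zippel estimate $|V_t|\lesssim q$ (Lemma~\ref{SZlemma}) combined with the trivial inequality $|\widehat{V_t}(m)|\leq q^{-2}|V_t|$. Both Fourier-decay hypotheses of Theorem~\ref{mainpintheorem} are thus in force with $d=2$ and this $T$.

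Finally, the hypothesis $|E||F|\geq Cq^{3}$ is precisely the size condition $|E||F|\geq Cq^{d+1}$ required by Theorem~\ref{mainpintheorem} for $d=2$, so that theorem directly produces a subset $F_0\subset F$ with $|F_0|\sim|F|$ and $|\Delta_P(E,y)|\gtrsim q$ for all $y\in F_0$, which is the assertion. There is essentially no obstacle to overcome, since the corollary is a direct consequence of Theorem~\ref{mainpintheorem}; the only point deserving a line of comment is that the exceptional set $T$ produced by Vu's theorem serves simultaneously as the exceptional set in the pinned estimate, because on $T$ the weaker decay $q^{-d/2}$ comes for free from Schwartz--Zippel, which is exactly the content of Remark~\ref{doit}.
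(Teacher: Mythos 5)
Your proposal is correct and follows exactly the paper's own route: Theorem \ref{mainpintheorem} with $d=2$, with hypothesis (\ref{ju1}) supplied by Theorem \ref{Vuthm} and hypothesis (\ref{ju2}) on the exceptional set $T$ supplied by (\ref{Jfinal}) in Remark \ref{doit}. The paper states this in one line; your version merely spells out the same verification (degree $\geq 2$, $|T|\leq k-1\sim 1$, matching exponents) in more detail.
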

\begin{proof} The proof follows immediately by applying Theorem \ref{mainpintheorem} along with Theorem \ref{Vuthm} and (\ref{Jfinal}) in Remark \ref{doit}.
\end{proof}

\end{document}